\documentclass[reqno,a4paper,11pt]{amsart}
\usepackage{amsmath,amsthm,amssymb,mathrsfs}
\usepackage[
left = 2.25cm,
right = 2.25cm,
top = 2.2 cm,
bottom = 2.2 cm
]{geometry}
\usepackage{graphicx,xcolor,tikz,mathtools}
\usepackage{bm,stmaryrd}
\usepackage{amsaddr}
\usepackage{enumitem}
\usepackage[toc,page]{appendix}
\usepackage[
pdfencoding=auto,
colorlinks = true,
citecolor = black,
linkcolor = black,
anchorcolor = black,
urlcolor = black, 
filecolor=black,
pdfkeywords={}
]{hyperref}
\usepackage{esint} 

\usepackage{todonotes}
\makeatletter
\define@key{todonotes}{Helmut}[]{%
	\setkeys{todonotes}{author=\textbf{Harald},inline,color=red!10}}%
\define@key{todonotes}{Andrea}[]{%
	\setkeys{todonotes}{author=\textbf{Andrea},inline,color=green!20}}%
\define@key{todonotes}{Tim}[]{%
	\setkeys{todonotes}{author=\textbf{Tim},inline,color=cyan!20}}%
\makeatother

\theoremstyle{plain}
\newtheorem{theorem}{Theorem}[section]

\newtheorem{lemma}[theorem]{Lemma}

\newtheorem{remark}[theorem]{Remark}
\theoremstyle{definition}

\numberwithin{equation}{section}

\newcommand{\bn}{\mathbf{n}}

\def\tE{\widetilde{\mathcal E}}

\newcommand{\bH}{\mathbf{H}}

\newcommand{\bP}{\mathbf{P}}

\newcommand{\bW}{\mathbf{W}}

\def\bL{\L}

\newcommand{\cE}{\mathcal{E}}

\def\mE{\mathcal E}

\def\argmin#1{\underset{#1}{\mathrm{argmin}}}

\renewcommand{\d}{\mathrm{d}}
\newcommand{\dx}{\,\d x}
\newcommand{\dt}{\,\d t}

\newcommand{\ddt}{\frac{\d}{\d t}}

\newcommand{\ptial}[1]{ \partial_{#1} }
\newcommand{\pt}{\ptial{t}}
\newcommand{\onehalf}{\frac{1}{2}}

\def\mA{\mathcal A}
\def\mB{\mathcal B}

\newcommand{\norm}[1]{\left\Vert #1 \right \Vert}

\newcommand{\normmm}[1]{\left\vert #1 \right\vert}

\def\Lps{ L^{p}(0,T;\W^{1-\frac1p,p}(\partial\Omega))}
\def\Wps{W^{\frac12-\frac1{2p},p}(0,T;\L^p(\partial\Omega))}
\def\Div{\mathrm{div}\,}
\DeclareMathOperator*{\esssup}{\mathrm{esssup}}

\def\w{\wedge}

\def\ovu{\overline u}

\def\d{{\rm d}}

\def\W{\boldsymbol{W}}

\def\L{\boldsymbol{L}}

\def\ddt{\frac{\d}{\d t}}

\def\mmu{\boldsymbol{\mu}}

\newcommand{\numberset}{\mathbb}
\newcommand{\N}{\numberset{N}}
\newcommand{\R}{\numberset{R}}

\def\bP{\boldsymbol{P}} 
\def\v{\boldsymbol{v}}

\def\u{{u}}

\def\phit_{\phi_t}

\def\tT{{T}}

\def\XT{X_{{T}}}
\def\YT{Y_{{T}}}
\def\norm#1{\left\Vert#1\right\Vert }

\def\ddt{\frac{d}{dt}}

\def\LLQ(#1,#2){L^{#1}(0,\widetilde{T};\L^{#2}(\Omega_0))}

\def\div#1{\text{div}_{#1}}

\def\non{\nonumber}

\def\multibold #1{\def\arg{#1}%
	\ifx\arg\pto \let\next\relax
	\else
	\def\next{\expandafter
		\def\csname #1#1#1\endcsname{{\boldsymbol #1}}%
		\multibold}%
	\fi \next}

\def\pto{.}

\def\multical #1{\def\arg{#1}%
	\ifx\arg\pto \let\next\relax
	\else
	\def\next{\expandafter
		\def\csname #1#1\endcsname{{\cal #1}}%
		\multical}%
	\fi \next}

\def\multimathop #1 {\def\arg{#1}%
	\ifx\arg\pto \let\next\relax
	\else
	\def\next{\expandafter
		\def\csname #1\endcsname{\mathop{\rm #1}\nolimits}%
		\multimathop}%
	\fi \next}

\multibold
qwertyuiopasdfghjklzxcvbnmQWERTYUIOPASDFGHJKLZXCVBNM.

\multical
QWERTYUIOPASDFGHJKLZXCVBNM.

\DeclarePairedDelimiter\ceil{\lceil}{\rceil}
\DeclarePairedDelimiter\floor{\lfloor}{\rfloor}

\allowdisplaybreaks[4]

\begin{document}
	
	\title[
    Weak and strong solutions for a class of quasilinear Allen--Cahn systems]
   {Weak and strong solutions\\ for a class of \\quasilinear Allen--Cahn systems}
	
	\author[H. Garcke, T. Laux, and A. Poiatti]{
		\small
		Harald Garcke$^\ast$, 
		Tim Laux$^{\dagger}$, and 
		Andrea Poiatti$^\ddagger$
	}
	
	\address{
		$^\ast$Fakult\"at f\"ur Mathematik,
		Universit\"at Regensburg,
		93053 Regensburg, Germany
	}
	\email{harald.garcke@ur.de}
	
	\address{
		$^{\dagger\dagger}$Institut f\"{u}r Mathematik \& Interdisziplin\"{a}res Zentrum f\"{u}r Wissenschaftliches Rechnen, Universit\"{a}t Heidelberg, Im Neuenheimer Feld 205, 69120 Heidelberg, Germany
	}
	\email{tim.laux@math.uni-heidelberg.de}
	
	\address{
		  $^\ddagger$Dipartimento di Scienze Matematiche, Fisiche e Informatiche, \\ Università degli Studi di Parma, 43124 Parma, Italy
	}
	\email{andrea.poiatti@unipr.it}

	
	\subjclass[2020]{
    35K55, 35Q99, 
 82C26.
    }
	\keywords{Allen--Cahn systems, quasilinear systems, maximal regularity, minimizing movements, de Giorgi interpolation, higher-order integrability.
       }

	\begin{abstract}
   We consider a quasilinear Allen--Cahn system which arises when the gradient energy term in the Ginzburg--Landau energy also contains zero order terms. 
   Such systems offer significant advantages in applications, since surface tensions and mobilities can be easily calibrated.
   The analysis for these systems is highly challenging, partly due to the fact that the gradient term in the energy is non-convex and since gradient terms appear quadratically in the weak formulation. This explains why an existence theory has been lacking for nearly thirty years.
   
   In this paper, we give the first existence and uniqueness results for such systems.   {Firstly,} we prove existence and uniqueness  of local-in-time strong solutions using the theory of maximal regularity. Here, non-standard techniques have to be applied due to the fact that linear constraints on the solution are involved and due to nonlinear boundary conditions.   {Secondly,} using a minimizing movement approach
   we show the existence of global-in-time weak solutions.  {Here, the} main difficulty arises from the fact that the underlying energy is not $\lambda$-convex. We overcome this issue by proving higher integrability of the gradient of the solution, first showing that solutions are bounded and then using an approach by Giaquinta and Modica. 
   This finally allows  us to pass to the limit in the time-discrete approximation.
    Using the de Giorgi interpolation technique, we are also able to show a  {sharp} energy decay property despite  the lack of convexity of the energy. 	
	\end{abstract}
	
	\maketitle

	\section{Introduction}
    Phase-field models provide a powerful framework to describe the evolution of interfaces. A crucial advantage of the phase-field methodology is that the interface does not need to be represented explicitly. In fact, a phase-field variable (also called order-parameter) is used to identify the phases as regions where the variable attains certain  values. The location of the interface is then given as the  region where the value of the phase-field variable changes its value quickly between these values.   {Hence,} the approach is of Eulerian type, meaning that, in particular, it is not necessary to follow the interface with the help of a parameterization.
    An attractive feature of phase-field models is that solutions are typically smooth  {even in} the case when the topology of the interfaces changes. 

    Thus, phase-field approaches  have been used in a multitude of situations modeling such diverse phenomena as grain growth, phase separation, Ostwald ripening, solidification, two-phase flow, phase transitions in metals, and interfaces in image analysis. In the simplest case, phase field models are based on a Ginzburg--Landau energy (also called Cahn--Hilliard energy)
    $$
    \int_\Omega\frac\varepsilon 2\normmm{\nabla u}^2\dx+\frac 1\varepsilon\int_\Omega \Psi(u) \dx,
    $$
    where $\Omega\subset \mathbb R^d$ is a  bounded  domain with suitable smoothness,  $\varepsilon>0$ is a small parameter proportional to the thickness of the diffuse interface layer, and $\Psi$ is a double-well potential having the property $\Psi(\pm 1)=0$ and $\Psi(u)>0$
    if  $u\notin \{ -1,1\} $. The $\Psi$-term in the energy therefore penalizes deviations 
    from the pure phases which attain the values $\pm 1$. 
The term $\frac\varepsilon 2\normmm{\nabla u}^2$
penalizes  rapid spatial changes in $u$ and will hence enforce that the size of the interfaces cannot become too large. One major result on the Ginzburg--Landau energy is that $\int_\Omega\frac\varepsilon 2\normmm{\nabla u}^2\dx+\frac 1\varepsilon \int_\Omega \Psi(u) \dx$ as $\varepsilon \to 0$ in the sense of $\Gamma$-limits converges to a multiple of the perimeter functional; see \cite{Modica87}.
The number multiplying the perimeter functional in the $\Gamma$-limit is the surface energy density and can  easily be explicitly computed from the double well function $\Psi$. In the proof of the $\Gamma$-limit one realizes that the thicknesses of typical interfaces are proportional to $\varepsilon$ and sending $\varepsilon$ to zero hence leads to a sharp interface limit of description using diffuse interfaces.

    In many applications several phases occur, and hence multiphase-field models  need to be studied. In this case, a vector of phase-fields $u$ is used. 
    In the vector $u=(u_1,u_2,\ldots,u_N)^T$
    of phase-fields, the component $u_i$ accounts for the local fraction of the  {phase} $i$ and due to this, the constraint
    $$\sum_{i=1}^N u_i=1$$
    needs to be imposed. The simplest version of a multiphase Ginzburg--Landau energy is given as
    $$\frac\varepsilon 2\sum_{i=1}^N\int_\Omega\normmm{\nabla u_i}^2\dx+\frac 1\varepsilon\int_\Omega \Psi(u) \dx.$$
    Here, $\Psi$ is a multiwell potential
   that  has a  minimum value $0$ at the minimum points $u=\mathbf e_i$, $i=1,\ldots,N$, where $\mathbf e_i$ are the standard basis vectors in $\mathbb R^N$.
     {Also for this multiphase Ginzburg--Landau energy the  $\Gamma$-limit can be identified, see \cite{Baldo90}. However, in general, the surface tensions (which depend on the two neighboring phases of an interface) cannot be computed explicitly.
    In applications, the surface tensions of the sharp interface formulation are typically known,} and it is desirable to calibrate the phase-field model in such a way that given surface tensions are recovered. In addition, in evolution problems of multi-phase systems, each pair
    of interfaces also has a specific   {mobility} coefficient.
In order to have enough flexibility   {to be able to calibrate the phase-field model to given matrices of surface tensions $(\sigma_{ij})_{i,j=1\ldots N}$ and mobilities $(\mu_{ij})_{i,j=1,\ldots,N}$}, Steinbach et al.\
\cite{STEINBACH1996} introduced a Ginzburg--Landau energy that is based on the anti-symmetric terms $$u_i\nabla u_j -u_j\nabla u_i$$
which are approximations of the directions normal to the interface and hence allows it to identify each of the interfaces individually. In fact, in \cite{STEINBACH1996}
the energy is expressed as 
\begin{equation}\label{eq:fullenergy}
\mathcal E(u):=\int_\Omega f(u,\nabla u)\dx +\int_\Omega \Psi(u)\dx, 
\end{equation}
with 
\begin{equation}\label{eq:fullenergy2}
f(u,\nabla u):=\sum_{i,j=1}^N\onehalf\varepsilon_{ij}^2\normmm{u_i\nabla u_j-u_j\nabla u_i}^2,
\end{equation}
which penalizes the energy individually at each of the interfaces connecting $\bf e_i$ and $\bf e_j$. 
The potential $\Psi$ as above fulfills the property
that the minimum value $0$ is attained exactly at the minimal points $u=\mathbf e_i$, $i=1,\ldots,N$. 
We define 
    \begin{align*}
        \Sigma:=\{x\in \mathbb R^N: \sum_{i=1}^Nx_i=1\},
    \end{align*}
    as well as the tangent space
       \begin{align*}
       T \Sigma:=\{x\in \mathbb R^N: \sum_{i=1}^Nx_i=0\},
    \end{align*}
    and the orthogonal projection  
     $\bP$  onto $T\Sigma$, i.e., setting $\mathbf e:=(1,1,\ldots,1)^T$, we have $$\bP:=Id-\frac1N \mathbf e\otimes \mathbf e.$$
Also, the $\Gamma$-limit of the  energy \eqref{eq:fullenergy}   {with} the constraint that values of $u$ lie on $\Sigma$  can be identified, see \cite{Bellettini}. In \cite{GarckeStoth} 
the  $L^2$ gradient flow of this energy was derived, taking the sum constraint $\sum_{i=1}^N u_i=1$ into account, and the authors obtained  the following Allen--Cahn system for $u\in \Sigma$: 
\begin{align}
\label{eq1}\partial_t u+\bP \big(-\div{}{(f_{,\nabla u}(u,\nabla u))}+ f_{,u}(u,\nabla u)+\Psi_{,u}(u)\big)&=0,\quad\text{ in }\Omega\times(0,\infty),\\
\bP (f_{,\nabla u}(u,\nabla u))\cdot \bn&=0, \quad\text{ on }\partial\Omega\times(0,\infty),\\
u(0)&=u_0,\quad\text{ in }\Omega,\label{eq3}
\end{align}
where $f_{,u}$ and $f_{,\nabla u}$ denote the partial derivatives of $f$ with respect to $u$ and  $\nabla u$, respectively,    {and} $\bn$ is the outward unit normal to $\partial\Omega$. The above initial boundary value problem has been used 
subsequently to model several multi-phase evolution problems.   {In} particular, the fact that surface tensions for the individual interfaces can be   {tuned} appropriately makes the model very powerful, see 
\cite{GarckeNestlerStothSIAM,GarckeNSIFB, NESTLER2000114, GarckeNStinnerSIAM} for practical applications of the approach.

Ever since its introduction in 1998 it has not been possible to 
 establish a theory of existence and uniqueness for the Allen--Cahn system \eqref{eq1}--\eqref{eq3}. Establishing such a theory is difficult due to the following 
 facts:
 \begin{itemize}
 \item The term $f_{,u}(u,\nabla u)$ is quadratic in $\nabla u$, making  it very challenging to apply approaches  based on energy methods.
 \item Although the gradient energy density
 $f(u,\nabla u)$ is convex in $u$ and $\nabla u$ individually,   {it} is \textit{not} jointly convex. It even turns out that the 
 energy is not $\lambda$-convex, i.e., one cannot add a quadratic term to make it convex, which makes it difficult to apply gradient flow approaches such as the de Giorgi minimizing movement strategy (\cite{AGS,degiorgi}). 
 \item The constraint $u\in \Sigma$ and the highly nonlinear structure of the boundary conditions make it difficult to apply semigroup approaches to show local-in-time well-posedness.
\end{itemize}
  {It is the goal of this paper to prove, for the first time for a quasilinear parabolic system of the form~\eqref{eq1}--\eqref{eq3},  
\begin{itemize}
\item local-in-time existence and uniqueness of strong solutions; 
\item global-in-time existence of weak solutions.
\end{itemize}
}
Before we discuss the main findings of this paper, let us discuss other properties of the initial-boundary value problem \eqref{eq1}--\eqref{eq3}. 
Summing the $N$ equations in \eqref{eq1}
gives $$
\partial_t \sum_{i=1}^Nu_i\equiv 0,
$$
and thus, if $u_0\in \Sigma$, then also $u(t)\in  \Sigma$ for any $t\geq 0$. We now focus  on the following energy, where we set $f(u,\nabla u):=\sum_{i,j=1}^N\onehalf
\normmm{u_i\nabla u_j-u_j\nabla u_i}^2$: 
    \begin{align}
        E(u):=\underbrace{\sum_{i,j=1}^N\onehalf\int_\Omega
        \normmm{u_i\nabla u_j-u_j\nabla u_i}^2\dx}_{\sum_{k=1}^d \int_\Omega E_1(u,\partial_{x_k} u)\dx}+\int_\Omega \Psi(u) \dx,\label{ene}
    \end{align}
 where we have defined
    \begin{align*}
        E_1(u,p):=\frac12\sum_{i,j=1}^N
        \normmm{u_i p_j-u_jp_i}^2,\quad \forall u\in \Sigma, p\in T\Sigma,
    \end{align*}
    Notice that 
    we choose for simplicity $\varepsilon_{ij}=\varepsilon= 1$, so that we have 
    \begin{align}
    E_1(u,p)=\frac 12\normmm{u\wedge p}^2,\quad f(s,X)=\frac 1 2 \sum_{k=1}^d E_1(s,X_{k\cdot}),\label{formused}
    \end{align}
    where we set 
    $$
    u\wedge p=\sum_{i,j=1}^N(u_ip_j-u_jp_i)e_i\wedge e_j,
    $$
    and $e_i\wedge e_j$ is the exterior product between the elements $i$, $j$ of the canonical basis of $\R^N$. 
We will discuss later which parts of the paper can be generalized to 
a matrix $(\varepsilon_{ij})$ as in \eqref{eq:fullenergy2}.
All parts generalize to an arbitrary $\varepsilon> 0$.

After having introduced the notation and main assumptions, we formulate  the main
results in Section \ref{mainresults}. They consist of   {Theorem~\ref{thm:existenceweak}} stating the existence of a global weak solution and   {Theorem~\ref{thm1}} on the local existence and uniqueness of a strong solution. In the same section, we also show that the energy density \eqref{eq:fullenergy2}
 is not jointly convex. In Section \ref{exstrong} we prove the local existence and uniqueness result using the theory of maximal regularity as presented, e.g., in \cite{PS2016}. The proof uses a contraction mapping principle and is made more difficult by the nonlinear boundary conditions.  The existence of weak solution is shown in Section \ref{exweak} using a minimizing movement scheme. Due to quadratic terms in the gradient   {in the Euler-Lagrange equation}, a higher-order integrability of the gradient needs to be shown.    {As the elliptic operator depends
 on the values of $u$ itself, we need to ensure that solutions remain sufficiently bounded, which  leads to restrictions on the  position of the minima of the multi-well potential. This, however, only leads to restrictions when the number of components \textcolor{black}{is} larger than four.}

\section{Notations and main assumptions}
Let $\Omega\subset \R^d$, $d\geq 1$, be a $d$-dimensional bounded domain with smooth boundary.
 We denote the usual Sobolev spaces by $W^{k,p}(\Omega )$%
	, where $k\in \mathbb{N}$ and $1\leq p\leq +\infty $, with norm $\Vert \cdot
	\Vert _{W^{k,p}(\Omega )}$. The Hilbert space $W^{k,2}(\Omega )$ is identified with $H^{k}(\Omega )$ with norm $\Vert \cdot \Vert _{H^{k}(\Omega )}$.
	Furthermore, given a (real) vector space $X$, we set $\mathbf{X}=X^r$, where $r\in \N$ is the number of components of the corresponding vector fields. We then denote by $(\cdot
	,\cdot )$ the inner product in $L^{2}(\Omega )$ and by $\Vert \cdot \Vert $
	the corresponding induced norm. By $(\cdot ,\cdot )_{X}$ and $\Vert \cdot
	\Vert _{X}$ we represent the canonical inner product and its induced norm in a generic real Hilbert
	space $X$, respectively.  

	We then introduce the Gibbs simplex
	\begin{equation}
    \mathbf{G}:=\left\{   {u}\in \mathbb{R}^{N}:%
	\sum_{i=1}^{N}u_{i}=1,\quad u_{i}\geq 0,\quad i=1,\ldots
	,N\right\} ,  \label{Gibbs}
	\end{equation}
    which is the set in which all  meaningful physical values attained by $u$ should lie in.

We assume that the potential $\Psi\in C^2(\R^N;\R^+)$, with $\Psi\geq -C$ for some $C>0$, can be decomposed as 
\begin{align}
\Psi=\Psi_1+\Psi_2,  
\end{align}
where $\Psi_1$ is convex and $\Psi_2(s):=\frac12 s\cdot \mathbf As$, with $\mathbf A\in \R^{N\times N}$ a symmetric $N\times N$ matrix.  
An example of such a potential is a smooth version of the well-known multi-well potential introduced in \cite{EL} (see also \cite{GGPS,GarckeStoth,CAC,Boyer2014, MR2553473}), which reads: for any $u\in \Sigma$,
\begin{align*}
\Psi(u)=\Psi_1(u)+\Psi_2(u):=\sum_{i=1}^N \psi_i(u_i)-\frac12u\cdot \mathbf Bu,
\end{align*}
with $\psi_i(s)$ as a regular potential such that $\psi_i(s)=0$ if and only if $s=0$, for any $i=1,\ldots,N$,  and $\mathbf B\in \R^{N\times N}$ a symmetric and positive definite matrix so that $\Psi\geq -C$.
{  
For further examples, we refer to Section 3 of \cite{GarckeNestlerStothSIAM}, and \cite{GarckeNStinnerSIAM, NestlerGarckeStinner}.}

When proving the existence of weak solutions, we will need the following, additional assumption: there exists $M<
\sqrt{\frac 4N}$ such that
\begin{align}
\Psi(T_M(\mathbf P u)+\tfrac1N\mathbf e)\leq \Psi(u),\quad \forall u\in \Sigma,\label{assbasic}
\end{align}
where the   {truncation $T_M$ is given by}
\begin{align}
T_M:T\Sigma\to T\Sigma,\qquad T_M(s):=\begin{cases}
s,\quad\text{ if }\normmm{s}\leq M,\\
\frac{M}{\normmm{s}}s,\quad\text{ if }\normmm{s}>M.
\end{cases}\label{TM}
\end{align}
This assumption has a simple geometric interpretation.   {  
Roughly speaking, the potential $\Psi$ needs to be large enough far away from the center of the Gibbs simplex}. If we consider $\mathbf z_0=\frac1N\mathbf e\in \Sigma$ together with $C_r:=\partial B_r(\mathbf z_0)\cap \Sigma$ (intersection between the $N$-dimensional ball of radius $r>0$ centered at $\mathbf z_0$, and $\Sigma$, corresponding to an $(N-1)$-dimensional ball), the potential $\Psi$ has to be increasing over $C_r$ for $r>M$, with $M<\sqrt{\frac 4N}$. If one assumes that $\Psi( s)\to \infty$ for $\normmm{s}\to \infty$, an example of such potential is a potential having wells in the Gibbs simplex not far from $\frac1N\mathbf e$, increasing along $C_r$ for $r>0$ sufficiently large. {  Notice that, when $N\leq4$, we have $\mathbf G\subset B_M(z_0)\cap \Sigma$ (see also Remark \ref{smallness}), so that the constraint is not actually active. Indeed, since the zeroes of the potential $\Psi$ have to be contained in the Gibbs simplex $\mathbf G$ to be physically relevant, it is enough to modify $\Psi$ outside $\mathbf G$ in order to comply with \eqref{assbasic}.}
  
We then define the energy $\cE_1$ as
\begin{align*}
    \cE_1(u,v):=\sum_{i=1}^d \int_\Omega E_1( u,\partial_{x_i} v)\dx, \quad \forall u\in \bL^\infty(\Omega; \Sigma),\quad \forall v\in \bH^1(\Omega;\Sigma),
\end{align*}
and introduce the quantity 
\begin{align*}
    \widetilde {\mathcal E}(u,v)=\cE_1(u,v)+{\int_\Omega \Psi(u)\dx},\quad \forall u\in \bL^\infty(\Omega; \Sigma)\quad \forall v\in \bH^1(\Omega;\Sigma).
\end{align*} 

We then denote by $\delta \mathcal E_1(\cdot,v)(u)$ the variation of $\mathcal E_1$ with respect to its first argument, evaluated at $u\in \bL^\infty(\Omega;\Sigma)$, for a fixed $v\in \bH^1(\Omega;\Sigma)$. 

On the other hand, for any fixed $u\in \bL^\infty(\Omega;\Sigma)$, we denote by $\delta \cE_1(u,\cdot)(v)$ the variation of $\cE_1$ with respect to its second argument, evaluated at $v\in \bH^1(\Omega;\Sigma)$. The same notations are used for $\widetilde\cE$.
 
Note that the total energy $ E$ can be seen as 
\begin{align}
 E(u)=\tE(u,u),
\end{align}
and this justifies the notation. We also set, for notational simplicity,
\begin{align}\label{E1def}
\widehat \mE_1(u):=\mathcal E_1(u,u). 
\end{align}

To make the notation more concrete, given $u \in \bL^\infty(\Omega;\Sigma)$, $v \in \bH^1(\Omega;\Sigma)$, we have
\begin{align*}
\langle \delta \mathcal E_1(\cdot,v)(u),h\rangle:=\sum_{k=1}^d\int_\Omega({u\wedge \partial_{x_k}v},h\wedge \partial_{x_k}v)\dx,\quad \forall h\in \bL^\infty(\Omega;T\Sigma), 
\end{align*}
as well as
\begin{align*}
 \langle \delta\mathcal E_1(u,\cdot)(v),h\rangle=\sum_{k=1}^d\int_\Omega({u\wedge \partial_{x_k}v},u\wedge \partial_{x_k}h)\dx,\quad \forall h\in \bH^1(\Omega;T\Sigma).
\end{align*}

\section{Main results}\label{mainresults}
In this section, we state our main results. First, we give an existence result of global weak solutions as follows
\begin{theorem}[Existence of weak solutions]
\label{thm:existenceweak}Let $\Omega$ be a bounded domain with Lipschitz boundary, $u_0\in \bH^1(\Omega;\GGG)$, with $\normmm{\bP u_0}\leq M$, $M^2<\frac{4}N$, almost everywhere in $\Omega$, and assume that the potential $\Psi$ satisfies \eqref{assbasic}. Then there exists a weak solution $u:\Omega\times[0,\infty)\to \Sigma$, such that 
\begin{align}
&\normmm{\bP u}\leq M,\text{ a.e. in } \Omega\times (0,\infty),\\& 
u\in L^\infty(0,\infty;\bH^1(\Omega;\Sigma))\cap H^1(0,\infty;\L^2(\Omega;\Sigma)),
\end{align}
and there exists $r>0$, which does not depend on $u$, such that for all $T>0$
\begin{align}
u\in L^r(0,T;\bW^{1,r}(\Omega;\Sigma)).\quad 
\label{regularity}
\end{align}
Moreover, $u$ satisfies, for any $w\in \bH^1(\Omega;T\Sigma)\cap\bL^\infty(\Omega)$,
\begin{align}
&\int_\Omega \partial_t{u}\cdot w\dx+\langle\delta \mathcal E_1(\cdot,u)(u)+\delta \mathcal E_1(u,\cdot)(u),w\rangle\non\\&+\int_\Omega\Psi'(u)\cdot w\dx=0 ,\quad \text{for a.a. }t>0,\label{inclusion2}
\end{align}
together with the sharp energy dissipation inequality
\begin{align}
\mathcal E(u(t))+\int_s^t\norm{\pt u(\tau)}_{\bL^2(\Omega)}^2\d\tau\leq \mathcal E(u(s)), 
\label{energyineq1}
\end{align}
$\text{ for any }t>0 ,\text{and almost any }0\leq s<t,\text{ with }s=0 \text{ included}$.
\end{theorem}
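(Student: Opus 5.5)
We construct the solution through a minimizing movement (implicit Euler) scheme. Fix a time step $\tau>0$, set $u^0_\tau:=u_0$, and define recursively
\begin{align*}
u^{n}_\tau\in\argmin{v\in \bH^1(\Omega;\Sigma),\ \normmm{\bP v}\leq M}\Big\{E(v)+\frac{1}{2\tau}\norm{v-u^{n-1}_\tau}^2\Big\}.
\end{align*}
Existence of minimizers follows from the direct method, using these ingredients:
\begin{itemize}
\item The constraint $\normmm{\bP v}\le M$ gives an $L^\infty$ bound.
\item For $u\in\Sigma$ with $\normmm{\bP u}\le M<\sqrt{4/N}$, the form $p\mapsto \normmm{u\wedge p}^2$ is coercive on $T\Sigma$. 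Indeed, $\normmm{u\wedge p}^2=\normmm u^2\normmm p^2-(u\cdot p)^2$, and $u\cdot p=\bP u\cdot p$ while $\normmm u^2=\frac1N+\normmm{\bP u}^2$. Hence $\normmm{u\wedge p}^2\ge \frac1N\normmm p^2$. I would double-check where exactly $M^2<4/N$ enters; most likely it is needed in the Euler--Lagrange/truncation step below.
\item The integrand is convex in the gradient variable, which yields lower semicontinuity.
\end{itemize}

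The next step is to remove the constraint. Replacing $v$ by $T_M(\bP v)+\frac1N\mathbf e$ does not increase $\Psi$, by \eqref{assbasic}, and does not increase the distance to $u^{n-1}_\tau$, because $T_M$ is the projection onto a convex ball. It also should not increase $\widehat\cE_1$. To see this, write $u\wedge\partial u$ in terms of $w=\bP u$: we have $u\wedge \partial_k u=(\frac1N\mathbf e+w)\wedge\partial_k w$. Radial truncation scales $\partial w$ tangentially by $M/\normmm w$, kills the radial part, and changes $w$ to $Mw/\normmm w$, so a pointwise computation should show the energy decreases. This is the place where $M^2<4/N$ should be used.

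With the constraint shown to be inactive in this sense, we derive the discrete Euler--Lagrange equation
\begin{align*}
\Big(\frac{u^n_\tau-u^{n-1}_\tau}{\tau},w\Big)+\langle\delta\cE_1(\cdot,u^n_\tau)(u^n_\tau)+\delta\cE_1(u^n_\tau,\cdot)(u^n_\tau),w\rangle+(\Psi'(u^n_\tau),w)=0
\end{align*}
for test functions $w\in\bH^1\cap\bL^\infty(\Omega;T\Sigma)$. Truncated variations are admissible, or one can use small variations, since the truncation argument shows the minimizer is also a local minimizer without the constraint in the relevant sense.

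The standard energy estimate $E(u^n_\tau)+\frac1{2\tau}\norm{u^n_\tau-u^{n-1}_\tau}^2\le E(u^{n-1}_\tau)$ gives uniform bounds in $L^\infty(\bH^1)$ and $H^1(\bL^2)$ for the piecewise constant and piecewise linear interpolants.

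The main obstacle is the passage to the limit in the term $\delta\cE_1(\cdot,u)(u)$, which is quadratic in $\nabla u$; weak $H^1$ convergence does not suffice. We need strong convergence of $\nabla u_\tau$ in $L^2_{t,x}$, or a uniform higher integrability bound. To obtain it, I would apply the Giaquinta--Modica reverse Hölder / Gehring argument to the Euler--Lagrange system at each step, viewed as an elliptic system. Its principal part $-\Div(A(u)\nabla u)$ has bounded, uniformly elliptic coefficients, thanks to the $L^\infty$ bound and the coercivity above. The right-hand side consists of $\frac{u^n-u^{n-1}}\tau\in L^2$, the term $\Psi'(u)$, and a quadratic gradient term. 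That quadratic term is tested against $(u-\bar u_{B})\eta^2$ and controlled by smallness of the oscillation, or simply bounded by $\normmm{\nabla u}^2\normmm{u-\bar u}$ with $\normmm{u}\le C$, as in the Giaquinta--Modica treatment of quadratic-growth systems with bounded solutions. I expect this to give a Caccioppoli inequality and then, via Gehring, $\nabla u^n_\tau\in L^{r}$ with $r>2$ and estimates summable in time, which is \eqref{regularity}.

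To conclude, time compactness (Aubin--Lions) gives $u_\tau\to u$ strongly in $L^2$. Testing the difference of the Euler--Lagrange equations against $u_\tau-u$ (suitably truncated) and using monotonicity of the principal part gives $\nabla u_\tau\to\nabla u$ strongly in $L^2$. Together with the $L^r$ bound, the quadratic terms then converge in $L^1$, which yields \eqref{inclusion2}. Finally, the sharp inequality \eqref{energyineq1} comes from de Giorgi variational interpolation. The discrete inequality, including the metric slope term, passes to the limit via lower semicontinuity. We then identify the slope with $\norm{\pt u}$ through the Euler--Lagrange equation, using the strong convergence of gradients in place of $\lambda$-convexity. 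The statement for a.e. $s$ follows from strong convergence of $E(u_\tau(s))$ at a.e. $s$.
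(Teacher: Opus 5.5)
Your architecture matches the paper's: truncation by $T_M$, the discrete Euler--Lagrange equation, Giaquinta--Modica higher integrability, strong convergence of gradients by testing with $u_\tau-u$, and De Giorgi interpolation with the slope identified through the Euler--Lagrange equation. Minimizing over the constrained set and then removing the constraint is a harmless variant. The genuine gap is in the step you single out as the main obstacle: your Caccioppoli inequality does not close under $M^2<\frac4N$. After testing with $\eta^2(u-\bar u)$, the quadratic term $\sum_i\int_\Omega(u\wedge\partial_{x_i}u,\eta^2(u-\bar u)\wedge\partial_{x_i}u)\dx$ has the same homogeneity as the elliptic term $\frac1N\int_\Omega\eta^2\normmm{\nabla u}^2\dx$. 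Bounding it by $C\int_\Omega\eta^2\normmm{u-\bar u}\normmm{\nabla u}^2\dx$ therefore helps only if the resulting constant is below $\frac1N$. Smallness of the oscillation is not available, since $u\in\bH^1\cap\bL^\infty$ need not be continuous for $d\ge2$; all you know is $\normmm{u-\bar u}\le 2M$. Even after discarding the $\frac1N\mathbf e$-component of $u$, which contributes nothing, the best absolute-value bound gives the constant $2M^2$, so you would need $M^2<\frac1{2N}$. That is eight times more restrictive than the statement. For instance, it excludes initial data in $\mathbf G$ taking values near a vertex when $N\le4$, where the theorem imposes no restriction at all. Invoking Giaquinta's higher-integrability theorem for quadratic-growth systems does not rescue this, since it carries a smallness hypothesis of exactly this type.

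The missing idea is the sign structure of this term. Let $\mu$ be the mean of $u$ over the cube, so $\normmm{\bP\mu}\le M$ by convexity, and split $u=(u-\mu)+\bP\mu+\frac1N\mathbf e$ in the first slot.
\begin{itemize}
\item By $(a\wedge b,c\wedge d)=(a,c)(b,d)-(a,d)(b,c)$ and $\mathbf e\perp T\Sigma$, the $\frac1N\mathbf e$-part vanishes.
\item The $(u-\mu)$-part is $\eta^2\normmm{(u-\mu)\wedge\partial_{x_i}u}^2\ge0$.
\item Young's inequality against this square controls the cross term, so the whole quadratic term is at least $-\frac14\eta^2\normmm{\bP\mu\wedge\partial_{x_i}u}^2\ge-\frac{M^2}4\eta^2\normmm{\partial_{x_i}u}^2$.
\end{itemize}
Writing $\eta\partial_{x_i}u=\partial_{x_i}(\eta(u-\mu))-(u-\mu)\partial_{x_i}\eta$, this is absorbed by $\frac1N\normmm{\nabla(\eta(u-\mu))}^2$ precisely when $\frac{M^2}4<\frac1N$. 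This is where $M^2<\frac4N$ enters. It is not needed for the truncation, since $\frac1N\normmm{\partial_{x_k}T_M(w)}^2+\normmm{T_M(w)\wedge\partial_{x_k}T_M(w)}^2\le\frac1N\normmm{\partial_{x_k}w}^2+\normmm{w\wedge\partial_{x_k}w}^2$ holds pointwise for every $M>0$. Without this absorption, the uniform $L^r(0,T;\bW^{1,r}(\Omega))$ bounds for both interpolants are not established. With them go the strong convergence of the gradients, the passage to the limit in the quadratic terms, and the identification of the De Giorgi slope term with $\norm{\pt u}$.
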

\begin{remark}
\label{smallness}
The regularity \eqref{regularity} is critical to prove the existence of solutions, as $r=2$ would not be enough to pass to the limit in the approximation scheme. Also, we note that, as the potential is smooth, we are not able to show that $u\in \GGG$, but we only obtain that $u\in\Sigma$ is bounded. The smallness condition related to $M^2<\frac4N$, crucial to obtain the higher-order integrability \eqref{regularity}, is an active constraint only for $N>4$. Indeed, up to $N=4$ we have $\min\{\tfrac4N,1-\tfrac 1N\}=1-\tfrac1N$ and  $\GGG\subset B_{\sqrt{1-\frac1N}}(\frac1N\mathbf e)$, so that $u_0\in \GGG$ already guarantees the constraint for $N\leq4$, since we can always choose $M^2\geq 1-\frac 1N$. The case $N=5$ is on the threshold, in the sense that it is the case when $\frac 4N=1-\frac 1N$, and $u_0\in \mathbf G$ is admissible if $u_0(x)$ is not a pure phase in any $x\in \Omega$. The constraint on $M$ and $u_0$ becomes stricter only when $N>5$.
\end{remark}

\begin{remark}[On the joint convexity of the energy $\mE_1$ for $N>2$]
\label{noconvex}
We crucially observe that the quantity $E_1(u,p)$, $u\in \Sigma$, $p\in T\Sigma$, is convex in each of the two arguments, although it is not jointly convex if $N>2$. Indeed, let us compute, for $v,w,q,r\in T\Sigma$,
\begin{align*}
    &\partial_{u}E_1(u,p)(v)=(u\wedge  p, v\wedge p),\quad 
    \partial_{p}E_1(u,p)(q)=(u\w p,u\w q),\\&
    \partial_{uu}E_1(u,p)(v,w)=(v\wedge p, w\wedge p),\quad
    \partial_{pp}E_1(u,p)(q,r)=({u \wedge q}, u\w r)\\&
     \partial_{pu}E_1(u,p)(v,r)=({u \wedge r}, v\w p)+(u\w p, v\w r),\quad
     \partial_{up}E_1(u,p)(q,w)=({w \wedge p}, u\w q)+(u\w p, w\w q),
\end{align*}
so that the quadratic form associated to the second variation reads
\begin{align}
S(u,p)(v,q):=\normmm{v\w  p}^2+\normmm{u\w q}^2+2(u\w q,v\w p)+2(u\w p,v\w q).
    \label{second}
\end{align}
Recalling the formula
\begin{align}
(u\w p,v\w q)=(u,v)(p,q)-(u,q)(p,v),
    \label{sums}
\end{align}
it is immediate to infer that $E_1$ is convex separately in each argument (just by choosing $v=0$ or $q=0$, respectively, and observing that the quadratic form is positive semidefinite), but not jointly convex if $N>2$. Namely, let us consider the following counterexample: fix $u\in \Sigma$,  and set $q=\bP u\in T\Sigma$. Choose then $v\perp q$ ($v\not=0$, which is possible if $N>2$) such that $v\in T\Sigma$, and define $p=\beta v\in T\Sigma$, $\beta>0$. Then it holds 
\begin{align*}
    S(u,p)(v,q)=\normmm{\bP u}^2\left({\frac 1N}-2\beta \normmm{v}^2\right),
\end{align*}
and this quantity can be made arbitrarily negative as $\beta\to \infty$. Note that even if $u,v$ are bounded, $p,q$ can be arbitrarily large and the same counterexample holds. This means that $\mE_1$ cannot be jointly convex. Due to this lack of joint convexity we need to use the higher integrability result \eqref{regularity} to pass to the limit in an approximation scheme. Additionally, we have to resort to the De Giorgi interpolation in the scheme to retrieve the sharp energy dissipation inequality \eqref{energyineq1}.
\end{remark}
\begin{remark}[Differences   {to} harmonic maps in 2D]
We point out that, in two dimensions, if we considered the geometric constraint $\normmm{u}=1$ as for harmonic maps, the quantity $(u_i\nabla u_j-u_j\nabla u_i)\nabla u_i$, $i,j=1,\ldots,N$, responsible for the main nonlinearity generating the difficulties in passing to the limit in the approximation scheme, can actually be solved. Indeed, it can be seen (cf. \cite{Helein}) that there exists $B_i^j$ such that $\nabla ^\perp B_i^j=u_i\nabla u_j-u_j\nabla u_i$, where $\nabla^\perp =(-\partial_{x_2},\partial_{x_1})$. This means that we can rewrite the nonlinearity as $(u_i\nabla u_j-u_j\nabla u_i)\nabla u_i=\nabla^\perp B_i^j\cdot \nabla u_i$, and by the well known div-curl compensated compactness lemma we can pass to the limit. In our case,  accounting for the linear constraint $u\in\Sigma$, the situation is not as fortunate, since the term $u_i\nabla u_j-u_j\nabla u_i$ is not divergence free and its Helmholtz decomposition contains a gradient term $\nabla \psi^j_i$. The product $\nabla \psi_i^j\cdot \nabla u_i$ cannot apparently pass to the limit in a straightforward way by a structural compensated compactness argument.
\end{remark}

In addition to the existence of weak solutions, we also propose a short-time existence and uniqueness result of strong solutions in the case of a fully general smooth potential $\Psi_1\in C^2(\R^N)$ and \textit{without} any restriction on the size of the initial datum. In particular, we have
\begin{theorem}
    Let $\Omega\subset \mathbb R^d$ be a bounded domain with $C^2$ boundary, and let $\Psi$ be a smooth (at least $C^2$) potential. If $\u_0\in \W^{2-\frac2p,p}(\Omega;\Sigma)$, with $p>d+2$, such that $(\normmm{u_0}^2Id-\bP u_0\otimes \bP u_0)\nabla u_0\cdot \mathbf n=0$ on $\partial\Omega$, then there exists a $T>0$ and a unique strong solution to \eqref{eq1}--\eqref{eq3}, with $f(u,\nabla u)=\frac12\sum_{k=1}^d\normmm{u\wedge \partial_k u}^2$, such that 
    $$
    u\in L^p(0,T;\W^{2,p}(\Omega;\Sigma))\cap W^{1,p}(0,T;\L^p(\Omega)).
    $$
    \label{thm1}
\end{theorem}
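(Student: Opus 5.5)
We work in the affine setting: write $u=\tfrac1N\mathbf e+w$ with $w$ taking values in $T\Sigma$, and compute the equation explicitly. By \eqref{sums}, for $u\in\Sigma$ and $p\in T\Sigma$,
\[
\bP\,\partial_p E_1(u,p)=\bP\big(\normmm{u}^2 p-(u\cdot p)u\big)=\big(\normmm{u}^2 Id-\bP u\otimes \bP u\big)p .
\]
Here we use $u\cdot p=\bP u\cdot p$ and $\bP p=p$. So the principal part of \eqref{eq1} is
\[
\partial_t u-\Div\big(\mathbf a(u)\nabla u\big)+\text{l.o.t.},\qquad \mathbf a(u):=\normmm{u}^2 Id-\bP u\otimes\bP u ,
\]
viewed as an operator on $T\Sigma$. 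The lower order terms are quadratic in $\nabla u$ and include $\bP\Psi'(u)$. The boundary condition is $\mathbf a(u)\nabla u\cdot\bn=0$.

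On $T\Sigma$, $\mathbf a(u)$ is symmetric with eigenvalues $\normmm{u}^2$ (on $(\bP u)^\perp$) and $\normmm{u}^2-\normmm{\bP u}^2=\tfrac1N$ (along $\bP u$), since $\normmm{u}^2=\tfrac1N+\normmm{\bP u}^2$. Hence $\mathbf a(u)\geq \tfrac1N Id$ on $T\Sigma$ uniformly, with no size restriction on $u$. The system is therefore normally elliptic and satisfies the Lopatinskii–Shapiro condition for the conormal Neumann condition. This is exactly why the theorem needs no restriction on the initial datum.

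The fixed point scheme follows \cite{PS2016}, working in $\mathbb E_T:=L^p(0,T;\W^{2,p}(\Omega;T\Sigma))\cap W^{1,p}(0,T;\L^p(\Omega;T\Sigma))$. Since $p>d+2$, this space embeds into $C([0,T];\W^{2-2/p,p})\hookrightarrow C([0,T];C^1(\overline\Omega))$. We freeze coefficients at $u_0$ and consider the linear problem
\[
\partial_t w-\mathbf a(u_0)\!:\!\nabla^2 w=F(w),\qquad \mathbf a(u_0)\nabla w\cdot\bn=G(w),\qquad w(0)=\bP u_0 .
\]
The right-hand sides are
\[
F(w)=\Div\big((\mathbf a(u)-\mathbf a(u_0))\nabla u\big)+(\text{terms in }\partial\mathbf a\,\nabla u\nabla u)-\bP f_{,u}-\bP\Psi'(u),
\qquad
G(w)=(\mathbf a(u_0)-\mathbf a(u))\nabla u\cdot\bn .
\]
The linear operator acts on $T\Sigma$-valued functions. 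Since $\bP$ commutes with everything involved, the problem is a genuine $(N-1)$-component parabolic system with $C^{1-}$ coefficients, and maximal $L^p$-regularity holds. The data live in $\L^p(\Omega_T)$ and in the trace space $\mathbb F_T:=\Wps\cap\Lps$, and the compatibility condition $G(w)(0)=\mathbf a(u_0)\nabla u_0\cdot\bn$ is required. Because $G(\cdot)$ vanishes at $t=0$, this reduces to the assumed $\mathbf a(u_0)\nabla u_0\cdot\bn=0$. We first subtract a reference solution $w^*$ of the linear problem with data $(0,0,\bP u_0)$, and then work on the ball of $\mathbb E_T$ of functions vanishing at $t=0$. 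On that subspace the embedding constants are uniform in $T$.

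The contraction step estimates $F$ and $G$. The bulk term is routine. Using $u\in C([0,T];C^1)$, the factor $\|\mathbf a(u)-\mathbf a(u_0)\|_{L^\infty}$ is small as $T\to0$, and the quadratic gradient terms are bounded in $L^\infty$, hence by $CT^{1/p}$ in $L^p$. The main obstacle is the nonlinear boundary term: we must estimate $(\mathbf a(u)-\mathbf a(u_0))\nabla u\cdot\bn$ and its Lipschitz differences in the $\Wps$-norm. For this we would use the product estimate in $W^{1/2-1/(2p),p}(0,T;\L^p(\partial\Omega))$ against $C^{1/2}$-in-time functions. The coefficient difference is Hölder in time, since $\mathbb E_T\hookrightarrow C^{1/2-\epsilon}([0,T];C(\overline\Omega))$ with constants uniform for zero initial trace. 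Combined with the trace theorem for $\nabla u\in {}_0 H^{1/2,p}(L^p)\cap L^p(W^{1,p})$, this gives a small Lipschitz constant as $T\to0$.

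Banach's fixed point theorem then yields a unique $w$ on a short interval, and $u=\tfrac1N\mathbf e+w$ solves \eqref{eq1}--\eqref{eq3}. Uniqueness in the full class follows because any other solution has the same initial trace and lies in some ball in which the map is contractive for small $T$. A standard continuation argument then extends uniqueness to the whole interval.
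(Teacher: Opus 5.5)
Your proposal is correct and follows essentially the same route as the paper. You freeze the coefficient $\mathbf a(u_0)=\normmm{u_0}^2Id-\bP u_0\otimes\bP u_0$, use $\mathbf a\geq\frac1N Id$ on $T\Sigma$ for normal ellipticity, the Lopatinskii--Shapiro condition and maximal $L^p$-regularity from \cite{PS2016}, and close a contraction whose Lipschitz constant vanishes as $T\to0$, estimating the boundary nonlinearity in the trace space; subtracting a reference solution to work with zero initial trace is only a tidier variant of the paper's affine space $X_T$ with uniform-in-$T$ embedding constants.

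As a side remark, $\partial_{\bn}u\in T\Sigma$ and $\mathbf a(u)$ is invertible on $T\Sigma$, so the boundary condition $\mathbf a(u)\nabla u\cdot\bn=0$ is equivalent to the linear condition $\partial_{\bn}u=0$. You could therefore linearize with a plain Neumann boundary operator, and the boundary term you identify as the main obstacle would disappear entirely.
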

The proof of this result relies on the theory of maximal regularity for analytic operators, and it is based on linearization and the contraction mapping theorem. Since $\Omega$ is a bounded domain with sufficiently smooth boundary, the most technical part is to address the validity of the so-called Lopatinskii-Shapiro conditions \cite{PS2016}.

\section{Proof of Theorem \ref{thm1}. Existence of local strong solutions}\label{exstrong}
We first state the following theorem about the $L^p$ maximal regularity of a specific operator which will appear in the proof of the existence of strong solutions.
\begin{theorem}
\label{Maxreg}
    Let $\Omega$ be a bounded domain of class $C^2$, assume $\ovu\in \mathbf C^{0,l}(\overline\Omega;\Sigma)\cap W^{1-\frac1p,p}(\partial\Omega)$, $l\in(0,1]$, and $\widetilde u_0\in W^{2-\frac2p,p}(\Omega;T\Sigma)$, for some $p\in(1,\infty)$. Consider the operator $\mathcal Au:=- (A_{ij}\partial_{kk} u_j)_{i=1}^N$, where $A(x):=\normmm{\ovu}^2Id-\bP\ovu\otimes \bP\ovu$. Define also $\mathcal Bu:=(A(x)\nabla u)\cdot \mathbf n$ on $\partial\Omega$. Then there is $\omega_0 \in \mathbb R$  such that for each $\omega>\omega_0$ the system
    \begin{align}
    \begin{cases}
   \partial_t u+\mathcal Au+\omega u=f,\quad&\text{in }\Omega,\\
    \mathcal Bu=g,\quad&\text{on }\partial\Omega,\\ u(0)=\widetilde u_0,\quad&\text{in }\Omega,\end{cases}
        \label{system}
    \end{align}
    with $g\in W^{\frac12-\frac1{2p},p}(\mathbb R^+;\L^p(\partial\Omega;T\Sigma))\cap L^{p}(\mathbb R^+;\W^{1-\frac1p,p}(\partial\Omega))$, $g(0)=\mathcal B u(0)$, and $f\in L^p(\mathbb R^+,\L^p(\Omega;T\Sigma))$,
    admits a
unique solution $u$ in the class $W^{1,p}(\mathbb R^+;L^p(\Omega;T\Sigma))\cap L^p(\mathbb R^+;W^{2,p}(\Omega))$. 
\end{theorem}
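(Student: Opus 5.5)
The plan is to reduce \eqref{system} to the abstract $L^p$-maximal regularity theory for parabolic systems with general boundary conditions of Pr\"uss--Simonett \cite{PS2016}. That theory yields the claim once we check three things: (E) normal ellipticity of the principal symbol, (LS) the Lopatinskii--Shapiro condition for the pair $(\mathcal A,\mathcal B)$, and the regularity and compatibility of coefficients and data.

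\textbf{Step 1: work in the fiber $T\Sigma$.} First observe that $A(x)$ maps $T\Sigma$ into itself. For $v\in T\Sigma$ we have
$$A v=\normmm{\ovu}^2v-\bP\ovu\,(\bP\ovu\cdot v),$$
and $\bP\ovu\in T\Sigma$. Moreover $A$ is symmetric, and $A\mathbf e=\normmm{\ovu}^2\mathbf e$. Hence $T\Sigma$ and its orthogonal complement are invariant. Fixing an orthonormal basis of $T\Sigma\cong\R^{N-1}$, the system \eqref{system} with $T\Sigma$-valued data becomes a genuine $(N-1)\times(N-1)$ system. Its coefficient matrix $A(x)|_{T\Sigma}$ is H\"older continuous, since $\ovu\in\mathbf C^{0,l}$.

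\textbf{Step 2: uniform positive definiteness.} This is the key estimate. Since $\ovu\in\Sigma$, we have $\normmm{\ovu}^2=\normmm{\bP\ovu}^2+\frac1N$. Cauchy--Schwarz then gives, for $v\in T\Sigma$,
$$v\cdot Av\ \geq\ \big(\normmm{\ovu}^2-\normmm{\bP\ovu}^2\big)\normmm{v}^2=\tfrac1N\normmm{v}^2 .$$
The upper bound $\normmm{\ovu}^2_{L^\infty}$ follows from continuity on $\overline\Omega$. Consequently the principal symbol $A(x)\normmm{\xi}^2$ is symmetric with spectrum in $[\frac1N\normmm{\xi}^2,C\normmm{\xi}^2]$. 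This gives (E) with angle $0<\pi/2$.

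\textbf{Step 3: Lopatinskii--Shapiro.} Freeze $x\in\partial\Omega$, rotate so that $\mathbf n=-e_d$, and take the Fourier transform in the tangential variables. For $\mathrm{Re}\,\lambda\geq0$ and $(\lambda,\xi')\neq0$ we must show that the half-line problem
$$\lambda v+A\normmm{\xi'}^2v-A\partial_y^2v=0 \ \ (y>0),\qquad -A\partial_yv(0)=h$$
has a unique solution decaying at infinity. Since $A$ is constant, symmetric and positive definite, diagonalize $A=QDQ^T$ with $Q$ orthogonal. The system then decouples into scalar problems
$$\lambda w+a\normmm{\xi'}^2w-aw''=0,\qquad -aw'(0)=h_k,\qquad a\geq\tfrac1N .$$
Each has the unique decaying solution $w=c\,e^{-\mu y}$ with $\mu=\sqrt{\lambda/a+\normmm{\xi'}^2}$, $\mathrm{Re}\,\mu>0$, and $c=h_k/(a\mu)$. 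This is well defined because $\mu\neq0$. Uniqueness holds because the only decaying solution of the homogeneous problem ($h=0$) is zero. Thus (LS) holds; in effect the invertible factor $A$ reduces $\mathcal B$ to a Neumann condition.

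\textbf{Step 4: conclusion and main obstacle.} We then invoke the maximal regularity theorem in \cite{PS2016} for parabolic problems with inhomogeneous boundary data. It requires:
\begin{itemize}
\item bounded uniformly continuous top-order coefficients in $\Omega$;
\item boundary coefficients in the trace-type class, which is exactly what the assumption $\ovu\in W^{1-\frac1p,p}(\partial\Omega)$ provides;
\item data $f,g$ in the stated spaces, together with the compatibility condition $g(0)=\mathcal B\widetilde u_0$.
\end{itemize}
This gives a unique solution on every finite interval $(0,T)$. For a sufficiently large shift $\omega>\omega_0$, the operator $\mathcal A+\omega$ with boundary condition $\mathcal B$ has spectrum in a right half-plane and is exponentially stable. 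The half-line version of the theorem then yields the result on $\R^+$.

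Finally, uniqueness within the full $\R^N$-valued class shows that the solution stays $T\Sigma$-valued. Taking the $\mathbf e$-component of the equation and using $\mathbf e\cdot A v=\normmm{\ovu}^2\,\mathbf e\cdot v$, the quantity $\mathbf e\cdot u$ solves a scalar uniformly parabolic Neumann-type problem with zero data, so it vanishes.

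I expect the main technical obstacle to be matching precisely the regularity hypotheses of \cite{PS2016} for the boundary coefficients in $\mathcal B$, namely the product structure $A(x)\nabla u\cdot\mathbf n$ with only H\"older and trace-regular $\ovu$. The algebraic conditions (E) and (LS) are straightforward once Step 2 is in place.
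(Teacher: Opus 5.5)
Your proposal is correct and follows essentially the paper's route: invoke the maximal regularity theorem of \cite{PS2016} with $E=T\Sigma$, obtain normal ellipticity from the bound $v\cdot A v\geq \frac1N\normmm{v}^2$ (a consequence of $\normmm{\ovu}^2=\normmm{\bP\ovu}^2+\frac1N$ and Cauchy--Schwarz), and assert the coefficient regularity conditions from the assumptions on $\ovu$. The only real difference is in the Lopatinskii--Shapiro check. You diagonalize the frozen symmetric matrix $A$ and solve the decoupled scalar ODEs explicitly, which gives existence as well as uniqueness of the decaying solution for arbitrary boundary data; the paper instead tests the homogeneous ODE with the solution itself, integrates by parts using the boundary condition, and uses the same coercivity bound to conclude that only the zero solution decays.
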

\begin{proof}[Proof of Theorem \ref{Maxreg}]
We want to apply \cite[Theorem 6.3.2]{PS2016}. To this aim we set $E=T\Sigma$, which is a finite dimensional Hilbert space, and thus of class $\mathcal H\mathcal T$ (see \cite{PS2016}). Then we write the operator $\mathcal A\in \mathcal L(E)$ as 
\begin{align*}
  \mathcal A(x,D)u:=(\normmm{\ovu}^2Id-\bP\ovu(x)\otimes  \bP\ovu(x))D_{kk}u,
\end{align*}
where $D=-i(\partial_1,\partial_2,\ldots,\partial_d)$, and $D_{kk}=D_kD_k$. From now on we will always use, whenever possible, Einstein summation convention.
Also, we can write $\mathcal B\in \mathcal L(E)$ as
\begin{align*}
    \mathcal B(x,D)=iA(x)D_ku\ \mathbf n_k\in E.
\end{align*}
We now aim at showing that the couple $(\mathcal A(x,D),\mB(x,D))$ is uniformly normally elliptic according to \cite[Definition 6.3.1]{PS2016}. First, $\mA$ is normally elliptic for any $x\in \overline\Omega$, since it is strongly elliptic, as 
\begin{align}
    (\mathcal A(x,\xi)v,v)\geq \frac1N\normmm{\xi}^2\normmm{v}^2,\label{ctrl1a}
\end{align}
for every $\xi\in \mathbb R^N$, $\xi\not=0$, for every $v\in E$ and for every $x\in \overline\Omega$. Indeed, recalling that $\ovu\in \Sigma$, by Cauchy-Schwarz inequality,
\begin{align*}
     (\mathcal A(x,\xi)v,v)&=((\normmm{\ovu}^2Id-\bP\ovu\otimes  \bP\ovu)\normmm{\xi}^2v)\cdot v\\&
     =\normmm{\xi}^2(\normmm{\ovu}^2\normmm{v}^2-(\bP\ovu\cdot v)^2)\\&
     =\normmm{\xi}^2(\normmm{v}^2\normmm{\bP\ovu}^2+\frac1N\normmm{v}^2-(\bP\ovu\cdot v)^2)\\&
     \geq \frac1N\normmm{\xi}^2\normmm{v}^2.
\end{align*}
Secondly, we need to prove that the Lopatinskii-Shapiro condition holds for any fixed $x\in \partial\Omega$. In our specific case we aim at verifying that, for any $x\in \partial\Omega$, for any $\lambda\in \mathbb C$ with $\mathrm{Re}\lambda\geq 0$ and any $\xi,\nu\in \mathbb R^d$, $(\xi,\lambda)\not=(0,0)$, $\normmm{\nu}=1$, $\xi\cdot \nu=0$,
\begin{align}
&\lambda u(y)+\mathcal A(x,\xi+D_y\nu)u(y)=0,\quad y>0,\nonumber\\&
\mathcal B(x,\xi+D_y\nu)u(0)=0
    \label{shapiro}
\end{align}
has exactly one solution $u\in C_0(\mathbb R^+;E),$ i.e., a continuous function vanishing at infinity. Recalling that $\nu\perp \xi$ and $\normmm{\nu}=1$, the condition can be reshaped as 
\begin{align*}
    &\lambda u(y)+A(x)(\normmm{\xi}^2+D_yD_y)u(y)=0,\quad y>0,\\&
    A(x)D_y u(0)=0.
\end{align*}
We can now multiply the equation by $u$ and integrate over $(0,\infty)$, to obtain, after an integration by parts, exploiting the condition at $y=0$ (recalling that $u$ is vanishing at infinity, which for a solution of an ODE means that $u$ and its derivatives converge to zero exponentially fast), and after taking the real parts, 
\begin{align*}
   \mathrm{Re} \lambda \int_0^\infty\normmm{u(y)}^2\d y+\mathrm{Re}\int_0^\infty A(x)(\xi_j+D_y\nu_j)u(y)\cdot (\xi_j+D_y\nu_j)u(y)\d y=0,
\end{align*}
which implies, since $\mathrm{Re}\lambda\geq0$, 
\begin{align}
    \mathrm{Re}\int_0^\infty A(x)(\xi_j+D_y\nu_j)u(y)\cdot(\xi_j+D_y  \nu_j)u(y)\d y\leq 0.\label{c1}
\end{align}
By a similar argument as to obtain \eqref{ctrl1a}, we infer
\begin{align*}
    A(x)v\cdot v\geq \frac 1N \normmm{v}^2,\quad \forall v\in \mathbb C^N,\quad \sum_{k=1}^Nv_k=0,
\end{align*}
so that inequality \eqref{c1} actually gives 
$$
\frac1N\sum_{j}\int_0^\infty\normmm{(\xi_j+D_y\nu_j)u}^2\d y=\frac1N\int_0^\infty(\normmm{\xi}^2\normmm{u}^2+\normmm{D_yu}^2)\d y\leq 0,$$
entailing that $u\equiv 0$ for any $y>0$, since it must be $u\in C_0(\mathbb R^+;E)$. This shows that the Lopatinskii-Shapiro condition is satisfied for any $x\in \partial\Omega$.

Since also $A\in C^{0,l}(\overline\Omega;\mathcal L(E))\cap W^{1-\frac1p}(\partial\Omega;\mathcal L(E))$ due to the assumption on $\ovu$, and since $\Omega$ has $C^2$ boundary, conditions (rA) and (rB) (with $p=q$, $r_{k}=p$, $m=1$) of \cite[Theorem 6.3.2]{PS2016} are also satisfied, and thus we can finally apply the theorem to obtain a solution $ u(x)\in E$ for any $x\in \overline\Omega$, with the  regularity required in Theorem \ref{Maxreg}, thus concluding its proof.

\end{proof}
We can now pass to proving our main   {local} existence theorem. 
\begin{proof}[Proof of Theorem \ref{thm1}]
Let us note that $\u_0\in \W^{2-\frac2p,p}(\Omega)\hookrightarrow \W^{1,2p}(\Omega)$ for $p\geq \tfrac{d+4}2$ and thus also for $p>d+2$. Additionally, the condition $p>d+2$ allows the embedding $\W^{2-\frac2p,p}(\Omega)\hookrightarrow \W^{s,p}(\Omega)$, $s>1+\tfrac dp$, and note that, by standard embedding results, $\W^{s,p}(\Omega)\hookrightarrow \mathbf C^{1,l}(\overline\Omega)$ for  $l=s-1-\frac d{p}\in(0,1)$. Given $T\in(0,T_0)$, {where $T_0>0$ is chosen arbitrarily large}, we introduce the spaces
	$$
	Z_T:=L^p(0,T;\W^{2,p}(\Omega;T\Sigma))\cap W^{1,p}(0,T;\L^p(\Omega)),
	$$
    $$
    \widetilde W_T:=W^{\frac12-\frac1{2p},p}(0,T;\L^p(\partial\Omega;T\Sigma))\cap L^{p}(0,T;\W^{1-\frac1p,p}(\partial\Omega)).
    $$
    Note that we consider functions in $T\Sigma$, which is a vector space.
	The spaces $Z_T,\widetilde{W}_T$ are endowed with the norms
	\begin{align}
		& \Vert f \Vert_{Z_T}:=\Vert f \Vert_{L^p(0,T;\W^{2,p}(\Omega))}+\Vert f\Vert_{W^{1,p}(0,T;\L^p(\Omega))}+\Vert f(0) \Vert_{\W^{2-\frac2p,p}(\Omega)}\\
        	& \Vert f \Vert_{\widetilde W_T}:=\Vert f \Vert_{L^{p}(0,T;\W^{1-\frac1p,p}(\partial\Omega))}+\Vert f\Vert_{W^{\frac12-\frac1{2p},p}(0,T;\L^p(\partial\Omega))}+\Vert f(0) \Vert_{\W^{(1-\frac1p)(1-\frac{2}{p-1}),p}(\partial\Omega)},
	\end{align}
	and we recall that, as in \cite[Lemma 2]{Saal1}, by standard embeddings, there exists $C(T_0)>0$ such that
	\begin{align}\label{BUC}
		\norm{\u}_{BUC([0,T];\W^{2-\frac2p,p}(\Omega))}\leq C(T_0)\norm{\u}_{X_T},
	\end{align}
    for any $T\in(0,T_0)$. It also holds (see, e.g., \cite{15H})
    \begin{align*}
    L^{p}(0,T;\W^{1-\frac1p,p}(\partial\Omega))\cap W^{\frac12-\frac1{2p},p}(0,T;\L^p(\partial\Omega))\hookrightarrow BUC([0,T];\W^{(1-\frac1p)(1-\frac{2}{p-1}),p}(\partial\Omega)), 
    \end{align*}
    which explains the choice of the norm for the space $\widetilde W_T$. Furthermore, it holds (see, e.g., \cite{15H,H})
    \begin{align}
    Tr_{\partial\Omega} (\nabla u),\  Tr_{\partial\Omega} (u)\in  L^{p}(0,T;\W^{1-\frac1p,p}(\partial\Omega))\cap W^{\frac12-\frac1{2p},p}(0,T;\L^p(\partial\Omega)),\quad \forall v\in Z_T. 
    \label{embf}
    \end{align}
In conclusion, we also have the following interpolation and embedding result
\begin{align*}
    (\W^{2-\frac2p,p}(\Omega),\bL^p(\Omega))_{\theta,p}\hookrightarrow  \bW^{(1-\theta)(2-\frac2p),p}(\Omega),
\end{align*}
where $\theta\in(0,1)$ is sufficiently close to 1, so that $ W^{(1-\theta)(2-\frac2p),p}(\Omega)    \hookrightarrow C^{1,\alpha}(\overline\Omega)$ for some $\alpha\in(0,1)$.
Now, since $Z_T\hookrightarrow W^{1,p}(0,\tT;\bL^p(\Omega)\hookrightarrow C^{0,1-\frac 1p}([0,\tT];\bL^p(\Omega))$, by \cite[Lemma 1]{AWe}, we also infer, by the interpolation result above \eqref{BUC},
\begin{align}
    \label{holder}
    Z_T\hookrightarrow C^{0,\theta(1-\frac1p)}([0,\tT];\W^{(1-\theta)(2-\frac2p),p}(\Omega) )\hookrightarrow C^{0,\gamma}([0,\tT];\mathbf C^{1,\alpha}(\overline\Omega)),
\end{align}
where $\gamma:=\theta(1-\frac1p)$.
	We also set $\YT:=L^p(0,T;\L^p(\Omega))$  and
          we introduce the spaces 
	$$
	\XT:=\{u\in Z_T:\ u(0)=\bP u_0\}, \qquad W_T:=\{v\in \widetilde W_T:\ v(0)=0\}.
	$$
	We now consider a solution $\widetilde u=u+\frac 1N\mathbf e$, with $u\in T\Sigma$, linearize the system \eqref{eq1}-\eqref{eq3}, and write it as follows:
	\begin{align*}
		\mathcal{L}(v)=\mathcal{F}(\u).
	\end{align*}
 Here,  the  linear operator $\mathcal{L}:\XT\to\YT\times W_T$  is
defined  as 
	\begin{align}
		\label{L}
		\mathcal{L}(v):=\begin{cases}
        \partial_tv-((\normmm{u_0}^2Id-\bP u_0\otimes \bP u_0)\Delta v)+\omega v,\quad\text{ in }\Omega\\
        ((\normmm{u_0}^2Id-\bP u_0\otimes \bP u_0)\nabla v)\cdot \mathbf n,\quad\text{ on }\partial\Omega
        \end{cases}
	\end{align}
	for some arbitrary $\omega>0$, where $\bP$ is the projection onto $T\Sigma$ and we have used the fundamental identity $\bP((\normmm{u_0}^2Id-u_0\otimes u_0)\Delta v)=((\normmm{u_0}^2Id-\bP u_0\otimes \bP u_0)\Delta v)$, for any $v\in X_T$. 
        We also introduce the possibly nonlinear operator
        $\mathcal{F}=(\mathcal F_1,\mathcal F_2):\XT\to\YT\times W_T$  given by 
	\begin{align} 
	\non	\mathcal{F}_1(v)&:=\omega v-\bP((\normmm{u_0}^2Id-u_0\otimes u_0)\Delta v)\non\\&\quad \non+\bP(\Div((\vert{v+\frac1N\mathbf e}\vert^2Id-(v+\frac1N\mathbf e)\otimes (v+\frac1N\mathbf e))\nabla v))-v\normmm{\nabla v}^2\\&
   \quad  +(v\cdot \partial_kv)\partial_k v-\bP\partial_u\Psi(v+\frac1N\mathbf e),
    \label{F}
	\end{align}
    \begin{align}
   \mathcal F_2(v):= -((\vert{v+\frac1N\mathbf e}\vert^2Id-v\otimes v)\nabla v)\cdot \mathbf n+((\normmm{u_0}^2Id-\bP u_0\otimes \bP u_0)\nabla v)\cdot \mathbf n,\quad\text{ on }\partial\Omega.
    \label{F2}
    \end{align}
	We aim now at proving that, under the assumptions of Theorem \ref{thm1}, there is a constant $C(T, R)>0$ such that
	\begin{align}
		\Vert\mathcal{F}(v_1) -\mathcal{F}(v_2)\Vert_{\YT\times W_T}
		\leq C(T, R)\Vert v_1 - v_2\Vert_{\XT},
		\label{CR}\end{align}
	for all $v_i\in \XT$ with $\Vert v_i\Vert_{\XT}
	\leq R$, $R>0$, and $i = 1, 2$. Furthermore it holds
	$C(T, R) \to0$ as ${T} \to0$.
	
	To prove this, we fix $v_i\in \XT$ with $\Vert v _i\Vert_{\XT}
	\leq R$, $R>0$, and split $\mathcal F$ in its summands.
    Namely, we start by using \eqref{holder}
    \begin{align*}
    &\norm{-\bP((\normmm{u_0}^2Id-u_0\otimes u_0)\Delta (v_1-v_2))+\bP((\vert{v_1+\frac1N\mathbf e}\vert^2Id-(v_1+\frac1N\mathbf e)\otimes (v_1+\frac1N\mathbf e))\Delta v_1)\right.\\&\left.-\bP((\vert{v_2+\frac1N\mathbf e}\vert^2Id-(v_2+\frac1N\mathbf e)\otimes (v_2+\frac1N\mathbf e))\Delta v_2)}_{\YT}\\&
    = \norm{-((\normmm{u_0}^2Id-\bP u_0\otimes \bP u_0)\Delta (v_1-v_2))+(\vert{v_1+\frac1N\mathbf e}\vert^2Id- v_1\otimes  v_1)\Delta v_1)\right.\\&\left.\quad -(\vert{v_2+\frac1N\mathbf e}\vert^2Id- v_2\otimes  v_2)\Delta v_2)}_{\YT}\\&
    \leq \norm{(\normmm{u_0}^2-\vert{v_1+\frac1N\mathbf e}\vert^2)\Delta(v_1-v_2)}_{\YT}+\norm{(\bP u_0\otimes \bP u_0- v_1\otimes  v_1)\Delta (v_1-v_2)}_{\YT}\\&
    \quad +\norm{(\vert{v_1+\frac1N\mathbf e}\vert^2-\vert{v_2+\frac1N\mathbf e}\vert^2)\Delta v_2}_{\YT}+\norm{( v_1\otimes  v_1- v_2\otimes  v_2)\Delta v_2}_{\YT}\\&
    \leq C(R)\norm{u_0-(v_1+\frac1N\mathbf e)}_{L^\infty(0,T;\L^\infty(\Omega))}\norm{v_1-v_2}_{\XT}+C(R)\norm{\bP u_0-v_1}_{L^\infty(0,T;\L^\infty(\Omega))}\norm{v_1-v_2}_{\XT}\\&\quad  +
    C(R)\norm{v_1-v_2}_{L^\infty(0,T;\L^\infty(\Omega))}\norm{v_2}_{L^p(0,T;\W^{2,p}(\Omega))}\\&
    \leq 
    C(R)T^{\gamma}\norm{v_1}_{\mathbf C^{0,\gamma}(\overline\Omega\times[0,T])}\norm{v_1-v_2}_{\XT}+C(R)T^{\gamma}\norm{v_1-v_2}_{\mathbf {C}^{0,\gamma}(\overline\Omega\times[0,T])}\\&
    \leq C(R)T^\gamma \norm{v_1-v_2}_{\XT},
    \end{align*}
    where we used the fact that $v_i(0)+\frac1N\mathbf e=u_0$, $i=1,2$, and $v_1(0)-v_2(0)=0$. Proceeding in the estimates, recalling \eqref{BUC} and the embedding $W^{2-\frac2p,p}(\Omega)\hookrightarrow W^{1,2p}(\Omega)$, we have 
\begin{align*}
&\norm{\bP(\partial_k(\vert{v_1}+\frac1N\mathbf e\vert^2\delta_{ij}-(v_{1,i}+\frac1N\mathbf e_i)(v_{1,j}+\frac1N\mathbf e_j))\partial_kv_{1,j}\right.\\&\left.-\partial_k(\vert{v_2+\frac1N\mathbf e}\vert^2\delta_{ij}-(v_{2,i}+\frac1N\mathbf e_i)(v_{2,j}+\frac1N\mathbf e_j))\partial_kv_{2,j})_{i=1}^N}_{\YT}\\&
\leq C(R)T^{\frac1p}\norm{v_1-v_2}_{L^{\infty}(0,T;\L^\infty(\Omega))}(\norm{v_1}_{L^{\infty}(0,T;\W^{1,2p}(\Omega))}+\norm{v_2}_{L^{\infty}(0,T;\W^{1,2p}(\Omega))})\\&\times (\norm{v_1}_{L^\infty(0,T;\W^{1,2p}(\Omega))}+\norm{v_2}_{L^\infty(0,T;\W^{1,2p}(\Omega)})\\&+C(R)T^{\frac1p}\norm{v_1-v_2}_{L^{\infty}(0,T;\W^{1,2p}(\Omega))}(\norm{v_1}_{L^\infty(0,T;\W^{1,2p}(\Omega)}+\norm{v_2}_{L^\infty(0,T;\W^{1,2p}(\Omega))})\\&
\leq C(R)T^\frac1p \norm{v_1-v_2}_{\XT}.
\end{align*}
 Then, by completely analogous arguments, we have 
\begin{align*}
&\norm{(v_1+\frac1N\mathbf e)\normmm{\nabla v_1}^2+(v_1\cdot \partial_kv_1)\partial_k v_1-(v_2+\frac1N\mathbf e)\normmm{\nabla v_2}^2  -(v_2\cdot \partial_kv_2)\partial_k v_2}_{\YT}
\\&
\leq C(R)T^{\frac1p}\norm{v_1-v_2}_{L^{\infty}(0,T;\L^\infty(\Omega))}(\norm{v_1}_{L^{\infty}(0,T;\W^{1,2p}(\Omega))}+\norm{v_2}_{L^{\infty}(0,T;\W^{1,2p}(\Omega))})\\&\times (\norm{v_1}_{L^\infty(0,T;\W^{1,2p}(\Omega))}+\norm{v_2}_{L^\infty(0,T;\W^{1,2p}(\Omega)})\\&+C(R)T^{\frac1p}\norm{v_1-v_2}_{L^{\infty}(0,T;\W^{1,2p}(\Omega))}(\norm{v_1}_{L^\infty(0,T;\W^{1,2p}(\Omega)}+\norm{v_2}_{L^\infty(0,T;\W^{1,2p}(\Omega))})\\&
\leq C(R)T^\frac1p \norm{v_1-v_2}_{\XT}.
\end{align*}
                                   In conclusion, since $\partial_u\Psi$ is locally Lipschitz continuous, it is immediate to infer, again using \eqref{BUC}, that 
\begin{align*}
\norm{\bP\partial_u\Psi(v_1+\frac1N \mathbf e)-\bP\partial_u\Psi(v_2+\frac1N \mathbf e)}_{\YT}\leq C(R)T^\frac1p\norm{v_1-v_2}_{\XT}.
\end{align*}
Collecting all the estimates above, we have thus shown that there is a constant $C_1(T, R)>0$ such that
	\begin{align}
		\Vert\mathcal{F}_1(v_1) -\mathcal{F}_1(v_2)\Vert_{\YT}
		\leq C_1(T, R)\Vert v_1 - v_2\Vert_{X_T},
		\label{est1A}\end{align}
	for all $v_i\in \XT$ with $\Vert v_i\Vert_{\XT}
	\leq R$, $R>0$, and $i = 1, 2$, with $C_1(T, R) \to0$ as ${T} \to0$.

Concerning the boundary operator $\mathcal F_2$, we have, using the embedding $\bW^{2,p}(\Omega)\hookrightarrow\bW^{2-\frac1p,p}(\partial\Omega)$, as well as \eqref{holder}, and recalling that $v_i(0)=\bP u_0$, $i=1,2$,
\begin{align*}
&\norm{\normmm{v_1+\frac1N\mathbf e}^2\nabla v_1\cdot \mathbf n-(\normmm{u_0}^2\nabla v_1)\cdot \mathbf n-\normmm{v_2+\frac1N\mathbf e}^2\nabla v_2\cdot \mathbf n+(\normmm{u_0}^2\nabla v_2)\cdot \mathbf n}_{\Lps}\\&
\leq \norm{\Big(\normmm{u_0}^2-\normmm{v_1+\frac1N\mathbf e}^2\Big)\nabla (v_1-v_2)\cdot \mathbf n}_{\Lps}\\&
\quad +\norm{\Big(\normmm{v_1+\frac1N\mathbf e}^2-\normmm{v_2+\frac1N\mathbf e}^2\Big)\nabla v_2\cdot \mathbf n}_{\Lps}\\&
\leq C\norm{u_0+v_1+\frac1N\mathbf e}_{L^\infty(0,T;\mathbf C(\overline\Omega))}\norm{\bP u_0-v_1}_{L^\infty(0,T;\mathbf C(\overline\Omega))}\norm{v_1-v_2}_{L^p(0,T;\W^{2-\frac1p,p}(\partial\Omega))}\\&
\quad +C\norm{u_0+v_1+\frac1N\mathbf e}_{L^\infty(0,T;\mathbf C(\overline\Omega))}\norm{\bP u_0-v_1}_{L^p(0,T;\bW^{1-\frac1p}(\partial\Omega))}\norm{v_1-v_2}_{L^\infty(0,T;\mathbf C^1(\overline\Omega))}\\&
\quad +C\norm{u_0+v_1+\frac1N\mathbf e}_{L^p(0,T;\mathbf W^{1-\frac1p} (\partial\Omega))}\norm{\bP u_0-v_1}_{L^\infty(0,T;\mathbf C(\overline\Omega))}\norm{v_1-v_2}_{L^\infty(0,T;C^1(\overline{\Omega}))}\\&
\quad +C\norm{v_2+v_1+\frac2N\mathbf e}_{L^\infty(0,T;\mathbf C(\overline\Omega))}\norm{v_1-v_2}_{L^\infty(0,T;\mathbf C(\overline\Omega))}\norm{v_2}_{L^p(0,T;\W^{2-\frac1p,p}(\partial\Omega))}\\&\quad +C\norm{v_2+v_1+\frac2N\mathbf e}_{L^\infty(0,T;\mathbf C(\overline\Omega))}\norm{v_1-v_2}_{L^p(0,T;\mathbf W^{1-\frac1p}(\partial\Omega))}\norm{v_2}_{L^\infty(0,T;\mathbf C^1(\overline\Omega))}
\\&\quad +C\norm{v_2+v_1+\frac2N\mathbf e}_{L^\infty(0,T;\mathbf W^{1-\frac1p}(\partial\Omega))}\norm{v_1-v_2}_{L^\infty(0,T;\mathbf C(\overline\Omega))}\norm{v_2}_{L^p(0,T;\W^{1,p}(\partial\Omega))}\\&
\leq 
C(R)T^\gamma \norm{v_1-v_2}_{X_T}.
\end{align*}
Let us denote $W^1_T:=\Wps$ and $W_T^2:=\Lps$.
With the same arguments as above, which we omit for the sake of brevity, we then also have 
\begin{align*}
&\norm{-(v_1\otimes v_1)\nabla v_1\cdot \mathbf n+(\bP u_0\otimes \bP u_0)\nabla v_1\cdot \mathbf n+(v_2\otimes v_2)\nabla v_2\cdot \mathbf n-(\bP u_0\otimes \bP u_0)\nabla v_2\cdot \mathbf n}_{W_T^2}\\&\leq C(R)T^\gamma \norm{v_1-v_2}_{X_T}.
\end{align*}
Proceeding in the estimates, we then have, by using the embeddings \eqref{embf} and \eqref{holder},
\begin{align*}
&\norm{\normmm{v_1+\frac1N\mathbf e}^2\nabla v_1\cdot \mathbf n-(\normmm{u_0}^2\nabla v_1)\cdot \mathbf n-\normmm{v_2+\frac1N\mathbf e}^2\nabla v_2\cdot \mathbf n+(\normmm{u_0}^2\nabla v_2)\cdot \mathbf n}_{W^1_T}\\&
\leq \norm{\Big(\normmm{u_0}^2-\normmm{v_1+\frac1N\mathbf e}^2\Big)\nabla (v_1-v_2)\cdot \mathbf n}_{W^1_T}\\&
\quad +\norm{\Big(\normmm{v_1+\frac1N\mathbf e}^2-\normmm{v_2+\frac1N\mathbf e}^2\Big)\nabla v_2\cdot \mathbf n}_{W^1_T}\\&
\leq C\norm{u_0+v_1+\frac1N\mathbf e}_{L^\infty(0,T;\mathbf C(\overline\Omega))}\norm{\bP u_0-v_1}_{L^\infty(0,T;\mathbf C(\overline\Omega))}\norm{\nabla (v_1-v_2)}_{W^1_T}\\&
\quad +C\norm{u_0+v_1+\frac1N\mathbf e}_{L^\infty(0,T;\mathbf C(\overline\Omega))}\norm{\bP u_0-v_1}_{W^1_T}\norm{\nabla (v_1-v_2)}_{L^\infty(0,T;\mathbf C(\overline\Omega))}\\&
\quad +C\norm{u_0+v_1+\frac1N\mathbf e}_{W^1_T}\norm{\bP u_0-v_1}_{L^\infty(0,T;\mathbf C(\overline\Omega))}\norm{\nabla (v_1-v_2)}_{L^\infty(0,T;\mathbf C(\overline\Omega))}\\&
\quad +C\norm{v_2+v_1+\frac2N\mathbf e}_{L^\infty(0,T;\mathbf C(\overline\Omega))}\norm{v_1-v_2}_{L^\infty(0,T;\mathbf C(\overline\Omega))}\norm{\nabla v_2}_{W^1_T}\\&\quad +C\norm{v_2+v_1+\frac2N\mathbf e}_{L^\infty(0,T;\mathbf C(\overline\Omega))}\norm{v_1-v_2}_{W^1_T}\norm{\nabla v_2}_{L^\infty(0,T;\mathbf C(\overline\Omega))}
\\&\quad +C\norm{v_2+v_1+\frac2N\mathbf e}_{W^1_T}\norm{v_1-v_2}_{L^\infty(0,T;\mathbf C(\overline\Omega))}\norm{\nabla v_2}_{L^\infty(0,T;\mathbf C(\overline\Omega))}\\&
\leq 
C(R)T^\gamma \norm{v_1-v_2}_{X_T}.
\end{align*}
Again, by a very similar argument, we also infer 
\begin{align*}
&\hspace{-2em}\norm{-(v_1\otimes v_1)\nabla v_1\cdot \mathbf n+(\bP u_0\otimes \bP u_0)\nabla v_1\cdot \mathbf n+(v_2\otimes v_2)\nabla v_2\cdot \mathbf n-(\bP u_0\otimes \bP u_0)\nabla v_2\cdot \mathbf n}_{W^1_T}\\&\hspace{-2em}\leq C(R)T^\gamma \norm{v_1-v_2}_{X_T}.
\end{align*}
As a consequence, we have thus shown that there is a constant $C_2(T, R)>0$ such that
	\begin{align}
		\Vert\mathcal{F}_2(v_1) -\mathcal{F}_2(v_2)\Vert_{W_T}
		\leq C_2(T, R)\Vert v_1 - v_2\Vert_{X_T},
		\label{est1B}\end{align}
	for all $v_i\in \XT$ with $\Vert v_i\Vert_{\XT}
	\leq R$, $R>0$, and $i = 1, 2$, where $C_2(T, R) \to0$ as ${T} \to0$.
This, together with \eqref{est1A}, leads to \eqref{CR}.
	
	We now  concentrate  on the operator $\mathcal L$. The fact
        that, for any ${T}>0$, this  operator  is invertible from $\YT\times W_T$
        to $\XT$ is an immediate consequence of Theorem \ref{Maxreg}, with $\overline u=u_0\in \W^{2-\frac2p,p}(\Omega)\hookrightarrow \mathbf C^{1,l}(\overline\Omega)\hookrightarrow \mathbf C^{0,l}(\overline\Omega)\cap \W^{1-\frac1p,p}(\partial\Omega)$ for some $l\in(0,1)$, since $p>d+2$. Indeed, the 
        theorem can be easily shown to hold also on bounded intervals $(0,T)$.  As a consequence, by the 
        bounded inverse
        theorem, for any $T>0$ 
        there exists $C(T)>0$, possibly depending on $T$, such that 
	$$
	\norm{\mathcal{L}^{-1}}_{\mathcal{L}(\YT\times W_T,\XT)}\leq C(T),\quad \forall T>0.
	$$
	On the other hand, the constant above does not
      change with $T$, as it can be seen from a trivial extension argument (see for instance the proof of \cite[Lemma 7]{AWe}), leading to the existence of $C(T_0)$  such that
	\begin{align}
	\norm{\mathcal{L}^{-1}}_{\mathcal{L}(\YT\times W_T,\XT)} \leq  C(T_0),\quad \forall 0<T< T_0.
		\label{map}\end{align}
	
	We can now complete the existence proof.
    We aim at
        solving via a fixed-point  argument the equation 
	$$
	\u=\mathcal{L}^{-1}\mathcal{F}\u \quad\text{in }\XT,
	$$
    {for some }
$T\in(0,T_0)$.	Consider a generic $\overline{u}\in \XT$, and then we fix $R>0$ such that 
	$\overline{u}\in\overline{B}_{ R}^{X_{T_0}}(0)$, where
        $\overline{B}_R^{\XT}(0)$ is the closed ball of $\XT$ of
        radius $R$ centered at  $0$.   Then we fix $0<T< T_0$ (possibly depending on $R$) such that the operator $\mathcal{L}^{-1}\mathcal{F}$ is a $(1/4)$-contraction mapping from $\XT$ to $\XT$. We can do this thanks to \eqref{CR} and \eqref{map}, since 
	\begin{align*}
		\Vert\mathcal{L}^{-1}\mathcal{F}(u_1) -\mathcal{L}^{-1}\mathcal{F}(u_2)\Vert_{\XT}
		\leq \norm{\mathcal{L}^{-1}}_{\mathcal{L}(\YT\times W_T,\XT)}C(T, R)\Vert u_1 - u_2\Vert_{\XT}\leq C(T_0) C(T, R)\Vert u_1 - u_2\Vert_{\XT},
	\end{align*}
	and thus we choose $T$ sufficiently small so that $C(T_0) C(T, R)\leq \frac14$. Then $\mathcal{L}^{-1}\mathcal{F}$ is well defined from $\overline{B}_R^{\XT}(0)$ to itself, since for any $\v\in \overline{B}_R^{\XT}(0)$ we have
	\begin{align*}
		\norm{\mathcal{L}^{-1}\mathcal{F}u}_{\XT}&\leq 	\norm{\mathcal{L}^{-1}\mathcal{F}u-\mathcal{L}^{-1}\mathcal{F}\overline{u}}_{\XT}+	\norm{\mathcal{L}^{-1}\mathcal{F}\overline{u}}_{\XT}\\&
		\leq
          \frac14\norm{u-\overline{u}}_{\XT}+\norm{\mathcal{L}^{-1}\mathcal{F}\overline{u}}_{X_{T_0}}
          < R,
	\end{align*}
	since $u,\overline{u}\in \overline{B}_R^{\XT}(0)$.
	
	Therefore, by Banach fixed point theorem applied on $\mathcal
        {L}^{-1}\mathcal F: \overline{B}_R^{\XT}(0)\to
        \overline{B}_R^{\XT}(0)$, there exists a unique solution
        $\u\in \overline{B}_R^{\XT}(0)\subset \XT$ to the problem
        under study. By a standard argument it is also easy to show
        that the solution  $\u\in\overline{B}_R^{\XT}(0)\subset\XT$ we
        just found is unique in $\XT$ (see, e.g. \cite[Proof of Theorem 3.1, Section 4.1.1]{PU}. Then, the unique solution to \eqref{eq1}-\eqref{eq3}, with the same regularity properties as $u\in \overline{B}_R^{\XT}(0)$, is $\widetilde u=u+\frac1N\mathbf e$. This concludes the proof of the theorem.
\end{proof}

\section{Proof of Theorem \ref{thm:existenceweak}. Existence of global weak solutions}\label{exweak}
In order to prove the existence of global-in-time weak solutions, we  consider the following minimizing movement scheme (\cite{AGS,degiorgi}): given $u_0\in \mathbf G$, for $h>0$, solve
  \begin{align}
    \nonumber u^{0}&=u_0,\\
   u^{n+1}&:=\argmin{u\in H^1(\Omega;\Sigma)} \widehat\cE_1(u)+\int_\Omega \Psi(u)\dx +\frac{\norm{u-u^{n}}_{\bL^2(\Omega)}^2}{2h}, \quad n\geq 0.\label{min1b}
\end{align}
Additionally, we consider the De Giorgi interpolation (cf. \cite{AGS,degiorgi}) as follows:  
  \begin{align}
\non u^h(nh)&:=u^n,
\\u^h(t)&:=\argmin{u\in H^1(\Omega;\Sigma)} \widehat\cE_1(u)+\int_\Omega \Psi(u)\dx +\frac{\norm{u-u^{n}}_{\bL^2(\Omega)}^2}{2(t-nh)}, \quad n\geq 0,\quad t\in (nh,(n+1)h].\label{min1c}
\end{align}
We will prove the existence of global weak solutions to \eqref{eq1}-\eqref{eq3} passing to the limit as $h\to0$.
Observe that the problem admits solutions by means of the Direct Method of Calculus of Variations, for $h>0$ sufficiently small. This is possible as $\Psi_1$ is convex and $\Psi_2$ is quadratic, and the functional $\widehat \cE_1(\cdot)$ is sequentially lower semicontinuous with respect to the weak $\bH^1(\Omega)$ topology, since, given a sequence $v_k\rightharpoonup v$ weakly in $\bH^1(\Omega)$ (and thus strongly in $\bL^2(\Omega)$) we have, for any $i,j=1,\ldots,N$,
\begin{align*}
\widehat \cE_1(v)\geq \sum_{i,j=1}^N\int_\Omega (v_{k,i}\nabla v_{k,j}-v_{k,j}\nabla v_{k,i})\cdot \xi_{ij}\dx-\frac12 \norm{\xi_{ij}}^2_{L^2(\Omega)},\quad \forall \xi_{ij}\in   C^\infty_c(\Omega),
\end{align*}
which is a trivial consequence of Young's inequality, since
$$
\frac12\normmm{v_{k,i}\nabla v_{k,j}-v_{k,j}\nabla v_{k,i}}^2\geq (v_{k,i}\nabla v_{k,j}-v_{k,j}\nabla v_{k,i})\cdot \xi_{ij}-\frac12 \normmm{\xi_{ij}}^2,
$$
for any $i,j=1,\dots,N$.
Therefore, taking the liminf as $k\to\infty$ we infer 
\begin{align*}
&\liminf_{k\to\infty} \frac12\int_\Omega\normmm{v_{k,i}\nabla v_{k,j}-v_{k,j}\nabla v_{k,i}}^2\dx \\&\geq \int_\Omega (v_{i}\nabla v_{j}-v_{j}\nabla v_{i})\cdot \xi_{ij}\dx-\frac12 \norm{\xi_{ij}}^2_{L^2(\Omega)},\quad \forall \xi_{ij}\in C^\infty_c(\Omega),
\end{align*}
so that, taking the supremum over $\xi_{ij}\in C_c^\infty(\Omega)$, we obtain, for any $i,j=1,\ldots,N$,
\begin{align}
\liminf_{k\to\infty}\frac12\int_\Omega\normmm{v_{k,i}\nabla v_{k,j}-v_{k,j}\nabla v_{k,i}}^2\dx\geq \frac12\int_\Omega\normmm{v_i\nabla v_{j}-v_{j}\nabla v_{i}}^2\dx.
    \label{lsc0}
\end{align} 
As a consequence, taking the sum over $i,j=1,\ldots,N$, we finally infer
\begin{align}
\liminf_{k\to\infty}\widehat\cE_1(v_k)\geq \sum_{i,j=1}^N \liminf_{k\to\infty}\frac12\int_\Omega\normmm{v_{k,i}\nabla v_{k,j}-v_{k,j}\nabla v_{k,i}}^2\dx\geq \widehat\cE_1(v).
    \label{lsc}
\end{align}
Let us then consider the sequences $\{u^n\}_{n\in\N}$ and the corresponding De Giorgi's interpolant $u^h$, where we 
point out here that $t\mapsto u_h(t)$ can be chosen to be Bochner measurable with values, for instance, in $\bL^2(\Omega)$, applying a suitable measurable selection theorem. We refer to \cite[Appendix B]{AbelsPoiatti} for a similar argument.

  {The proof of Theorem~\ref{thm:existenceweak} is now structured in the following sections as follows. First, we prove uniform pointwise bounds on our approximations, heavily exploiting the fundamental property that $\widehat \cE_1(v)=\frac1N\norm{\nabla \bP v}_{\bL^2(\Omega)}^2+\widehat \cE_1(\bP v)$. Then, we derive the sharp energy-dissipation inequality for the approximations. To get a first compactness result, we extract uniform estimates from this inequality. 
Then we improve this by proving higher integrability of the gradients which is crucial to prove the strong convergence of the gradients. 
This finally allows us to pass to the limit in the weak formulation and in the energy-dissipation inequality.}

\subsection{Boundedness of $\{u^n\}_n$ and $u^h$}
The first key result is the fact that the sequences are bounded, under assumption \eqref{assbasic} on the potential $\Psi$. More precisely, we can prove that, if $\normmm{\bP u_{n-1}}\leq M$ for some $M>0$, then $\normmm{\bP u_n}\leq M$ and  $\normmm{\bP u^h(t)}\leq M$, for $t\in [(n-1)h,nh]$.  Note that at this level we do not actually need any upper bound on $M>0$, which will be needed only later on.
Now, note that, for $u:\Omega\to \Sigma$, 
\begin{align}
  \non  &\widehat\cE_1(u)=\sum_{i=1}^d \int_\Omega E_1( u,\partial_{x_i} u)\dx=\sum_{i=1}^d \int_\Omega \normmm{ u\wedge \partial_{x_i} u}^2\dx\\&
    =\sum_{i=1}^d \int_\Omega \frac1N\normmm{\partial_{x_i}u}^2+\normmm{ \bP u\wedge \partial_{x_i} u}^2\dx=\sum_{i=1}^d \int_\Omega \frac1N\normmm{\partial_{x_i}\bP u}^2+\normmm{ \bP u\wedge \partial_{x_i}\bP  u}^2\dx.\label{decomp}
\end{align}
Then, we see that, recalling the definition of the operator $T_M$ in \eqref{TM}, for any $M>0$, if $\normmm{\bP u(x)}>M$, then 
\begin{align*}
&\frac1N\normmm{\partial_{x_i}T_M(\bP u)}^2+\normmm{ T_M(\bP u)\wedge \partial_{x_i}T_M(\bP  u)}\\&=\frac1N\frac{M^2}{\normmm{\bP u}^2} \normmm{(Id-\frac{\bP u}{\normmm{\bP u}}\otimes \frac{\bP u}{\normmm{\bP u}})\partial_{x_i}\bP u}^2+\frac{M^2}{\normmm{\bP u}^2}\normmm{\bP u\wedge \partial_{x_i}\bP u}^2\\&\leq \frac1 N \normmm{\partial_{x_i}\bP u}^2+\normmm{\bP u\wedge \partial_{x_i}\bP u}^2,
\end{align*}
so that we can deduce 
\begin{align}
\widehat\cE(T_M(u)+\tfrac1N\mathbf e)\leq \widehat\cE(u),\quad \forall u\in H^1(\Omega;\Sigma).
\end{align}
In conclusion, if $\normmm{\bP u^n}\leq M$, then it holds 
\begin{align*}
\normmm{T_M(u)+\tfrac 1N\mathbf e-u^{n}}=\normmm{T_M(u)- \bP u^{n}}\leq \normmm{\bP u- \bP u^{n}}=\normmm{ u-  u^{n}},
\end{align*}
so that 
\begin{align*}
\norm{T_M(u)+\tfrac 1N\mathbf e-u^{n}}_{\bL^2(\Omega)}\leq \norm{u-u^{n}}_{\bL^2(\Omega)}.
\end{align*}
As a consequence, if $u$ is optimal for \eqref{min1b}, then also $T_M(u)+\tfrac1N\mathbf e$ is optimal and, of course, bounded by $M>0$ in its projection on $T\Sigma$. By assumption we have $u_0\in\mathbf G$, and  $\normmm{\bP u_0}\leq  M$ (which is not a constraint for $N\leq 4$, recallig Remark \ref{smallness}), and thus starting from $n=0$ we can construct a sequence $\{u^n\}_n$ such that 
\begin{align}
\normmm{\bP u^n}\leq M,\quad \forall n\in \N,
\label{unb}
\end{align}
for $M^2<\frac 4N$. Analogously, we can construct the De Giorgi interpolant $u^h$, which of course enjoys the same property, namely 
\begin{align}
\normmm{\bP u^h(t)}\leq M,\quad \forall t\geq0.
\label{unb1}
\end{align}
\subsection{Sharp energy inequality}
Since the energy is not jointly convex (cf. Remark \ref{noconvex}), we need to resort to De Giorgi's interpolation to obtain the desired sharp energy dissipation inequality. In particular, it is standard to verify (see, e.g., \cite{AGS}) that, for, e.g., $t\in(0,h]$, the functional 
\begin{align*}
f_h(t):=\widehat\cE_1(u^h(t))+\int_\Omega \Psi(u^h(t))\dx +\frac{\norm{u^h(t)-u_{0}}_{\bL^2(\Omega)}^2}{2t}
\end{align*}
is locally Lipschitz continuous and satisfies
\begin{align}
    \ddt f_h(t)=-\frac1 {2t^2}\norm{u^h(t)-u_{0}}_{\bL^2(\Omega)}^2,\quad \text{ for a.e.\ }t\in(0,h].
\label{derivation}
\end{align}
We can also see that $t\mapsto \widehat\cE_1(u^h(t))+\int_\Omega \Psi(u^h(t))\dx $ is monotone nonincreasing. 
Indeed, for $0<s<t$, we first obtain that $t\mapsto {\norm{u^h(t)-u_{0}}_{\bL^2(\Omega)}^2}$ is nondecreasing. 
To show this, we just exploit the optimality properties of $u^h(t)$ and $u^h(s)$ to deduce, for $0<s<t$,
\begin{align*}
&\widehat\cE_1(u^h(t))+\int_\Omega \Psi(u^h(t))\dx +\frac{\norm{u^h(t)-u_{0}}_{\bL^2(\Omega)}^2}{2t}\\&
\leq \widehat\cE_1(u^h(s))+\int_\Omega \Psi(u^h(s))\dx +\frac{\norm{u^h(s)-u_{0}}_{\bL^2(\Omega)}^2}{2t}\pm \frac{\norm{u^h(s)-u_{0}}_{\bL^2(\Omega)}^2}{2s}\\&
\leq \widehat\cE_1(u^h(t))+\int_\Omega \Psi(u^h(t))\dx +\frac{\norm{u^h(t)-u_{0}}_{\bL^2(\Omega)}^2}{2s}+\left(\frac 1{2t}-\frac1{2s}\right)\norm{u^h(s)-u_{0}}_{\bL^2(\Omega)}^2,
\end{align*}
so that, rearranging the terms, and recalling that $\tfrac 1 s>\tfrac 1 t$, we infer the desired monotonicity: 
\begin{align}\label{monot}
\norm{u^h(s)-u_{0}}_{\bL^2(\Omega)}^2\leq \norm{u^h(t)-u_{0}}_{\bL^2(\Omega)}^2,\quad \forall 0<s<t.
 \end{align}
The monotonicity of the energy is then a trivial consequence, since by the optimality properties of $u^h(t)$ we infer 
\begin{align*}
&\widehat\cE_1(u^h(t))+\int_\Omega \Psi(u^h(t))\dx +\frac{\norm{u^h(t)-u_{0}}_{\bL^2(\Omega)}^2}{2t}\\&
\leq \widehat\cE_1(u^h(s))+\int_\Omega \Psi(u^h(s))\dx +\frac{\norm{u^h(s)-u_{0}}_{\bL^2(\Omega)}^2}{2t}\\&
\leq \widehat\cE_1(u^h(s))+\int_\Omega \Psi(u^h(s))\dx +\frac{\norm{u^h(t)-u_{0}}_{\bL^2(\Omega)}^2}{2t},
\end{align*}
where in the last inequality we used the monotonicity \eqref{monot}. This gives 
\begin{align*}
&\widehat\cE_1(u^h(t))+\int_\Omega \Psi(u^h(t))\dx 
\leq \widehat\cE_1(u^h(s))+\int_\Omega \Psi(u^h(s))\dx ,\quad \forall 0<s<t,\quad t\in(0,h],
\end{align*}
as desired. Also, we observe that again the optimality property of $u^h(t)$ gives
\begin{align*}
&\widehat\cE_1(u^h(t))+\int_\Omega \Psi(u^h(t))\dx 
\leq \widehat\cE_1(u_0)+\int_\Omega \Psi(u_0)\dx ,\quad \forall  t\in(0,h].
\end{align*}
As a consequence, repeating these very same arguments on any interval $(nh,(n+1)h]$ we get
\begin{align*}
&\widehat\cE_1(u^h(t))+\int_\Omega \Psi(u^h(t))\dx 
\leq \widehat\cE_1(u^h(s))+\int_\Omega \Psi(u^h(s))\dx ,\quad \forall \ nh<s<t\leq (n+1)h,\\&
\widehat\cE_1(u^h(t))+\int_\Omega \Psi(u^h(t))\dx 
\leq \cE_1(u^{n},u^{n})+\int_\Omega \Psi(u^n)\dx ,\quad \forall  t\in(nh,(n+1)h],
\end{align*}
which finally give
\begin{align}
t\mapsto \widehat\cE_1(u^h(t))+\int_\Omega \Psi(u^h(t))\dx\quad\text{ is monotone nonincreasing }\forall t\geq 0,
\label{monot1}
\end{align}
entailing
\begin{align}
\label{ua}
\sup_{t\geq 0}\left(\widehat\cE_1(u^h(t))+\int_\Omega \Psi(u^h(t))\dx\right)\leq \cE_1(u_0,u_0)+\int_\Omega \Psi(u_0)\dx, 
\end{align}
so that, since $\Psi\geq -C$ by assumption, 
\begin{align}
\label{ub}
\sup_{t\geq 0}\widehat\cE_1(u^h(t))\leq \widehat\cE_1(u_0)+\int_\Omega \Psi(u_0)\dx+C. 
\end{align}
To obtain the sharp energy inequality, we now integrate \eqref{derivation} in time over $(0,h]$, obtaining 
\begin{align}
&\non \widehat\cE_1(u^{1})+\int_\Omega \Psi(u^{1})\dx+\frac{1}{2h}\norm{u^{1}-u_{0}}_{\bL^2(\Omega)}^2+\int_0^h \frac{1}{2t^2}\norm{u^h(t)-u_0}^2_{\bL^2(\Omega)}\dt\\&\leq \widehat\cE_1(u_0)+\int_\Omega \Psi(u_0)\dx.
\label{energy ineq1}
\end{align}
We can now iterate this argument over the intervals $(nh,(n+1)h]$, so that, after a telescoping argument, the energy inequality becomes, for any $n\geq k$, $k\in \N\cup \{0\}$,
\begin{align}
&\widehat\cE_1(u^{n+1})+\int_\Omega \Psi(u^{n+1})\dx+\frac{1}{2h}\sum_{m=k+1}^{n+1}\norm{u^{m}-u^{m-1}}_{\bL^2(\Omega)}^2\non\\&+\sum_{m=k+1}^{n+1}\int_{(m-1)h}^{mh} \frac{1}{2(t-(m-1)h)^2}\norm{u^h(t)-u_{m-1}}^2_{\bL^2(\Omega)}\dt\non\\&\leq \widehat\cE_1(u^k)+\int_\Omega \Psi(u^k)\dx.
\label{energy ineq}
\end{align}
 Also, recalling \eqref{min1b}, the sequence $\{u_n\}_n$ satisfies, 
for any $w\in \bH^1(\Omega;T\Sigma)\cap\bL^\infty(\Omega)$,
\begin{align}
&\int_\Omega \frac1h(u^{n+1}-u^n)\cdot w\dx+\langle\delta \mathcal E_1(\cdot,u^{n+1})(u^{n+1})+\delta \mathcal E_1(u^{n+1},\cdot)(u^{n+1}),w\rangle\non\\&+\int_\Omega\Psi'(u^{n+1})\cdot w\dx=0 ,\quad \text{for any }n\in\N,\label{inclusion3}
\end{align}
which can be written more explicitly as 
\begin{align}
&\int_\Omega \frac1h(u^{n+1}-u^n)\cdot w\dx+\sum_{i=1}^N\int_\Omega (w\wedge \partial_{x_i}u^{n+1},u^{n+1}\wedge   \partial_{x_i}u^{n+1})\non\dx\\&+\sum_{i=1}^N\int_\Omega (u^{n+1}\wedge \partial_{x_i}w,u^{n+1}\wedge   \partial_{x_i}u^{n+1})\dx+\int_\Omega\Psi'(u^{n+1})\cdot w\dx=0 ,\quad \text{for any }n\in\N.\label{inclusion4}
\end{align}
Similarly, the De Giorgi interpolant $u^h$ satisfies, 
for any $w\in \bH^1(\Omega;T\Sigma)\cap\bL^\infty(\Omega)$,
\begin{align}
&\int_\Omega \frac 1{t-nh}(u^h(t)-u^{n})\cdot w\dx+\langle\delta \mathcal E_1(\cdot,u^h(t))(u^h(t))+\delta \mathcal E_1(u^h(t),\cdot)(u^h(t)),w\rangle\non\\&+\int_\Omega\Psi'(u^h(t))\cdot w\dx=0, \label{Degiorgi}
\end{align}
for any $n\in \N$ and $t\in(nh,(n+1)h]$.
\subsection{Uniform estimates}
Let us define $u_{h}$ the piecewise constant function corresponding to $\{u^{n+1}\}_n$ (this means that $u_{h}(t)=u^n$ for $t\in\left[nh,(n+1)h\right)$). Also, $\widehat u_{h}$ is the piecewise affine function corresponding to the same sequence, so that $\pt\widehat u_{h}=\frac{u^{n+1}-u^n}h$ for $t\in[nh,(n+1)h)$, $n\in\N$. On the other hand, as in \cite{AbelsPoiatti} we set
$$
g_h(t):=t-\floor{\frac th}h,\quad\text{for any }t\in(nh,(n+1)h),\quad \forall n\in \N,
$$
together with 
$$
v^h(t):=\frac{1}{g_h(t)}(v^h(t)-u_h(t)).
$$
 Thanks to \eqref{ub} and \eqref{energy ineq} (with $k=0$), we infer, for some $C>0$,
\begin{align}
&\esssup_{t\geq 0}\norm{\nabla u_{h}(t)}+\esssup_{t\geq 0}\norm{\nabla \widehat u_{h}(t)}+ \esssup_{t\geq 0}\norm{\nabla u^{h}(t)}\leq C,\label{estimates1}
\\&
\norm{\pt \widehat u_{h}}_{L^2(0,T;\bL^2(\Omega))}+\norm{ v_{h}}_{L^2(0,T;\bL^2(\Omega))}\leq C(T),\quad \forall T>0.
\label{estimates3}
\end{align}
Also, recalling \eqref{unb}-\eqref{unb1}, we have
\begin{align}
\normmm{\bP u_{h}}\leq M, \quad \normmm{\bP \widehat u_h}\leq M,\quad \vert{\bP u^{h}}\vert\leq M,\quad\text{ a.e. in }\Omega,
    \label{bounds0}
\end{align}
so that 
\begin{align}
\normmm{u_{h}}\leq \sqrt{M^2+\frac1{N}}, \quad \normmm{\widehat u_h}\leq \sqrt{M^2+\frac1{N}},\quad \vert{u^{h}}\vert\leq \sqrt{M^2+\frac1{N}},\quad\text{ a.e. in }\Omega,
    \label{bounds}
\end{align}
for some $C>0$. Moreover, $u_h$ and $\widehat u_h$ satisfy, from \eqref{inclusion4},
\begin{align}
&\int_\Omega \partial_t \widehat u_h\cdot w\dx+\sum_{i=1}^N\int_\Omega (w\wedge \partial_{x_i}u_h,u_h\wedge   \partial_{x_i}u_h)\non\dx\\&+\sum_{i=1}^N\int_\Omega (u_h\wedge \partial_{x_i}w,u_h\wedge   \partial_{x_i}u_h)\dx+\int_\Omega\Psi'(u_h)\cdot w\dx=0 ,\quad \text{for any }t\geq 0,\label{inclusion5}
\end{align}
whereas the De Giorgi interpolant $u^h$ and $v^h$ satisfy
\begin{align}
&\int_\Omega  v^h\cdot w\dx+\sum_{i=1}^N\int_\Omega (w\wedge \partial_{x_i}u^h,u^h\wedge   \partial_{x_i}u^h)\non\dx\\&+\sum_{i=1}^N\int_\Omega (u^h\wedge \partial_{x_i}w,u^h\wedge   \partial_{x_i}u^h)\dx+\int_\Omega\Psi'(u^h)\cdot w\dx=0,\label{inclusion5b}
\end{align}
for almost any $t\geq0$.
Then, up to subsequences, we have, also using the Aubin-Lions Theorem,
\begin{align}
    \label{c1ab}u_{h}\overset{*}{\rightharpoonup} u 
    &\qquad \text{weakly* in }\bL^\infty(\Omega\times(0,T);\Sigma),
    \\
    u_{h}\overset{*}{\rightharpoonup} u
    &\qquad \text{weakly* in }L^\infty(0,T;\bH^1(\Omega)),\label{weakH1}
    \\
    \label{DeGiorgiA}
    u^{h}\overset{*}{\rightharpoonup} z
    &\qquad \text{weakly* in }\bL^\infty(\Omega\times(0,T);\Sigma),
    \\
    \label{DeGiorgiB}
    u^{h}\overset{*}{\rightharpoonup} z
    &\qquad \text{weakly* in }L^\infty(0,T;\bH^1(\Omega)),
    \\
    \widehat u_h\to w
    &\qquad \text{strongly in }L^2(0,T;\bL^2(\Omega)),\label{strong1}
    \\
    \label{c2ab}\widehat u_h\overset{*}{\rightharpoonup} w
    &\qquad \text{weakly* in }\bL^\infty(\Omega\times(0,T);\Sigma),
    \\
    \partial_t\widehat u_h\rightharpoonup \partial_t w
    &\qquad \text{weakly in }L^2(0,T;\bL^2(\Omega;T\Sigma)),\label{c3a}
    \\
   v^h\rightharpoonup v
    &\qquad \text{weakly in }L^2(0,T;\bL^2(\Omega;T\Sigma)),\label{c3b}
\end{align}
for any $T>0$.
In order to identify $u$ and $w$, it is enough to note that, for $t\in[nh,(n+1)h)$, $t\leq T$, $T>0$, by \eqref{estimates3},
\begin{align*}
   \norm{u_h(t)-\widehat u_h(t)}_{\bL^2(\Omega)}&=  \norm{\widehat{u}_h(nh)-\widehat u_h(t)}_{\bL^2(\Omega)}\\&\leq \left(\int_{nh}^t\norm{\partial_t\widehat u_h(s)}_{\bL^2(\Omega)}^2\d s\right)^\frac12\sqrt h\\&\leq C(T)\sqrt h\to 0,
\end{align*}
as $h\to 0$, which gives, by Lebesgue's Dominated Convergence,
\begin{align*}
\int_0^T\norm{u_h(t)-\widehat u_h(t)}_{\bL^2(\Omega)}\dt\to 0,
\end{align*}
for any $T>0$. This, together with \eqref{c1ab} and \eqref{strong1}, gives $u=w$ almost everywhere in $\Omega\times(0,\infty)$.
 Of course, recalling \eqref{bounds}, this also entails 
\begin{align}
u_h\to u\quad \text{ strongly in }L^q(0,T;\bL^s(\Omega)),\quad \forall q,s\in[1,\infty),
    \label{uh}
\end{align}
as well as, from this convergence and \eqref{bounds}, 
\begin{align}
\normmm{u}\leq \sqrt{M^2+\frac 1N},\quad\text{ a.e. in }\Omega\times(0,\infty).
\label{boundu}
\end{align}
In conclusion, we need to identify $u$ with $z$, which is the limit of the De Giorgi interpolant $u^h$. To this aim, we note that, from \eqref{energy ineq}, we have
\begin{align*}
	\int_0^T\norm{u^h(t)-u_h(t)}^2_{\bL^2(\Omega))}\dt\leq \int_0^T\norm{v^h}_{\bL^2(\Omega)}^2g_h^2\dt\leq h^2 \int_0^T\norm{v^h}_{\bL^2(\Omega)}^2\dt,
\end{align*}
where we used that $\normmm{g_h(t)}\leq h$. This, together with \eqref{uh}, allows to infer that
\begin{align}
	u^h\to u\quad \text{ strongly in }L^2(0,T;\bL^2(\Omega)),
	\label{uhA}
\end{align}
which gives $z=u$ almost everyhwere in $\Omega\times(0,\infty)$, and, using the uniform bound \eqref{bounds0},
\begin{align}
	u^h\to u\quad \text{ strongly in }L^q(0,T;\bL^s(\Omega)),\quad \forall q,s\in[1,\infty).
	\label{uhB}
\end{align}

Note that to be precise also $v$ is not yet identified, but we will see that $v=\pt u$ in the last step of the proof.  
\subsection{  {Higher} integrability of the gradients}
In order to pass to the limit in  \eqref{inclusion5}, we now need to perform an elliptic argument to obtain higher integrability for $\nabla u_h$, which so far would only   {be controlled} in $L^\infty(0,\infty;\bL^2(\Omega))$. We prove the following general lemma for   {solutions} to a suitable elliptic PDE:
\begin{lemma}
\label{higher}
Let $\Omega$ be a bounded domain with Lipschitz boundary. Let then $u\in \bL^\infty(\Omega)\cap \bH^1(\Omega;\Sigma)$, $\normmm{\bP u}\leq M$ with $M>0$, be a solution to 
\begin{align}
    \sum_{i=1}^N(u\wedge \partial_{x_i}u,w\wedge \partial_{x_i} u)+\sum_{i=1}^N(u\wedge \partial_{x_i}u,u\wedge \partial_{x_i} w)=(g,w),\quad \forall w\in \bH^1(\Omega;T\Sigma)\cap \bL^\infty(\Omega),\label{weakform}
\end{align}
where $g\in \bL^2(\Omega)$.
Then, if $M<\sqrt{\frac 4N}$, there exists $p>2 $, independent of $u$, such that 
\begin{align}
\norm{u}_{W^{1,p}(\Omega)}\leq C(M)(1+\norm{u}_{\bH^1(\Omega)}+\norm{g}_{\bL^2(\Omega)}).\label{higherint}
\end{align}
\end{lemma}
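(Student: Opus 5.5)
Following the abstract, the approach is to run the Giaquinta--Modica argument: derive a Caccioppoli inequality for $u$ with a reverse-H\"older structure, then apply Gehring's lemma in the form of Giaquinta--Modica to obtain $\nabla u\in L^p$ for some $p>2$.

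\textbf{Step 1: reformulation.} We work with $v:=\bP u$, which satisfies $\normmm{v}\le M$. By \eqref{decomp}, the principal part of \eqref{weakform} is $(A(v)\nabla v,\nabla w)$ with $A(v)=\normmm{v+\tfrac1N\mathbf e}^2Id-v\otimes v$ acting on $T\Sigma$. This matrix satisfies $A(v)\ge\frac1N Id$ and $\normmm{A(v)}\le M^2+\frac1N$. The first sum in \eqref{weakform}, $(u\wedge\partial_i u,w\wedge\partial_i u)$, is a lower-order term with quadratic growth, bounded pointwise by $C\normmm{u}\,\normmm{w}\,\normmm{\nabla u}^2$. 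Both sums are therefore at most quadratic in $\nabla u$.

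\textbf{Step 2: Caccioppoli inequality.} This step is the main obstacle. The natural test function $w=\eta^2(v-\bar v_R)$, where $\bar v_R$ is the average of $v$ on $B_R$, admissible since it is bounded and $H^1$ with values in $T\Sigma$, produces
\[
\tfrac1N\int\eta^2|\nabla v|^2 \;\le\; \int \eta^2\,|u\wedge\nabla u|\,|(v-\bar v_R)\wedge\nabla u| + C\int |\nabla v|\,\eta|\nabla\eta|\,|v-\bar v_R| + \int |g|\,|w|.
\]
The quadratic term is not small and cannot be absorbed directly. To handle it we exploit the algebra of $\wedge$ together with the bound $M$. Writing $w=\eta^2 v$-type test functions, i.e.\ a Hildebrandt--Widman type choice, the quadratic term becomes $\eta^2\,(u\wedge\partial u, v\wedge\partial u)$, which by \eqref{sums} is controlled by $\normmm{v}\,\normmm{u}\,\normmm{\partial u}^2$. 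Since $\normmm{u}^2=\normmm{v}^2+\frac1N$, we compare this with the coercivity $\frac1N\normmm{\partial v}^2+\normmm{v\wedge\partial v}^2$. Only the component of $\partial v$ orthogonal to $v$ enters the wedge products. An elementary inequality in this orthogonal decomposition should show that the quadratic term is strictly dominated by the coercive term exactly when $M^2<\frac4N$. Concretely, one needs $2\normmm{v}\normmm{u}\,ab\le \frac1N a^2+(\normmm{v}^2+\frac1N)b^2$ (with $a,b$ suitable components), and $\frac4N$ is precisely the discriminant threshold. I would first verify this pointwise quadratic-form inequality, obtaining a margin $\delta(M)>0$. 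Combining the test function $\eta^2(v-\bar v_R)$ with $\eta^2\normmm{v}^2$-type tests, as in the regularity theory for harmonic maps into small balls, then yields
\[
\int_{B_{R/2}}|\nabla u|^2\le \frac{C}{R^2}\int_{B_R}|u-\bar u_R|^2+C\int_{B_R}(|g|^2+1).
\]
Near $\partial\Omega$ the same holds for balls centred on the boundary. The Neumann-type structure of \eqref{weakform} (all $w$ allowed) means no boundary terms appear. Lipschitz regularity of $\partial\Omega$ allows the use of Poincar\'e on $B_R\cap\Omega$.

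\textbf{Step 3: Gehring.} Sobolev--Poincar\'e converts the Caccioppoli bound into a reverse H\"older inequality:
\[
\fint_{B_{R/2}}|\nabla u|^2\le C\Big(\fint_{B_R}|\nabla u|^{2_*}\Big)^{2/2_*}+C\fint_{B_R}(|g|^2+1),
\qquad 2_*=\tfrac{2d}{d+2}.
\]
The Giaquinta--Modica version of Gehring's lemma then gives $\nabla u\in L^p$ for some $p>2$. Here $p$ depends only on $d$, $N$, $M$ and $\Omega$, since the constants depend only on $M$ and not on $u$. Covering $\overline\Omega$ by finitely many balls gives \eqref{higherint}. The integrability of $g$ is only $L^2$, so the exponent must be capped accordingly; this is harmless because we only need some $p>2$ with right-hand side $\norm{g}_{\bL^2}$, which follows by using $g$ in $L^{2}\subset L^{2^*\!\prime}$-type duality in the Caccioppoli step.
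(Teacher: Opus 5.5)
Your overall architecture matches the paper: a Caccioppoli inequality, Sobolev--Poincar\'e on cubes, Gehring's lemma in the Giaquinta--Modica form, and boundary cubes handled via the Lipschitz boundary. Your ``natural'' test function $\eta^2(v-\bar v_R)$ is exactly the paper's $\zeta^2(u-\mmu)$, because $v-\bar v_R=u-\mmu\in T\Sigma$.

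The gap is in Step~2, which you flag yourself. You abandon this test function, claiming the quadratic term cannot be absorbed, and replace the decisive computation by a Hildebrandt--Widman heuristic that does not deliver the lemma.
\begin{itemize}
\item Your concrete inequality $2\normmm{v}\normmm{u}ab\le\frac1N a^2+(\normmm{v}^2+\frac1N)b^2$ holds for all $a,b$ iff $\normmm{v}^2\normmm{u}^2\le\frac1N\normmm{u}^2$, i.e.\ iff $\normmm{v}^2\le\frac1N$. Its threshold is therefore $M^2<\frac1N$, not $\frac4N$.
\item Testing with $\eta^2 v$ puts $\normmm{v}^2\normmm{\nabla\eta}^2$ on the right-hand side instead of $\normmm{v-\bar v_R}^2\normmm{\nabla\eta}^2$, so it cannot feed a reverse H\"older inequality.
\item With that test the quadratic term is in fact $\eta^2\normmm{v\wedge\partial_{x_i}u}^2\ge0$, since the $\frac1N\mathbf e$ part drops out by \eqref{sums}. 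So the whole difficulty lies in the centering.
\item The promised combination with ``$\eta^2\normmm{v}^2$-type tests'' is never specified.
\end{itemize}

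The missing idea is a splitting that lets you absorb the term directly with the centred test function $w=\zeta^2(u-\mmu)$. In the term $(u\wedge\partial_{x_i}u,\zeta^2(u-\mmu)\wedge\partial_{x_i}u)$, write the first slot as $u=(u-\mmu)+\bP\mmu+\frac1N\mathbf e$.
\begin{itemize}
\item The $\mathbf e$-part vanishes by \eqref{sums}, since $u-\mmu$ and $\partial_{x_i}u$ lie in $T\Sigma$.
\item The $(u-\mmu)$-part is the square $\zeta^2\normmm{(u-\mmu)\wedge\partial_{x_i}u}^2$.
\item Young's inequality on the cross term, spending exactly this square, leaves $-\frac14\zeta^2\normmm{\bP\mmu\wedge\partial_{x_i}u}^2\ge-\frac{M^2}{4}\zeta^2\normmm{\partial_{x_i}u}^2$, because $\normmm{\bP\mmu}\le M$.
\end{itemize}
Against the coercivity $\frac1N\normmm{\nabla(\zeta(u-\mmu))}^2$ coming from \eqref{decomp}, this is absorbed precisely when $M^2<\frac4N$, which yields \eqref{pp}. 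In your notation, the relevant quadratic form is $\frac1N a^2-Mab+b^2$ with $a=\normmm{\partial_{x_i}u}$ and $b=\normmm{(u-\mmu)\wedge\partial_{x_i}u}$. Its discriminant condition is exactly $M^2<\frac4N$.

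A secondary issue concerns the $g$-term. Your displayed Caccioppoli inequality carries $\int(\normmm{g}^2+1)$. With $g$ only in $L^2$, this inhomogeneity has no integrability beyond $L^1$, and Gehring's lemma then gives no exponent above $2$. Your closing sentence gestures at a fix, but the displayed inequality as written would not work. The paper's fix is simply $\normmm{u-\mmu}\le 2M$, which bounds the $g$-term by $CM\int_{Q_{2R}}\normmm{g}$. That integrand lies in $L^2$, which is the extra integrability the Giaquinta--Modica lemma requires.
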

\begin{remark}
	The smallness constraint on $M$ is apparently unavoidable, see also \cite[Theorem 2.3]{Giaquinta}. Nevertheless, here the smallness condition is larger than in \cite{Giaquinta}, as we are exploiting some further monotonicity property of the operators involved.
\end{remark}
\begin{proof}
The proof is based on a Caccioppoli inequality, a reverse Hölder inequality and a perturbation argument from \cite{GiaquintaModica}. A similar argument, applied to a different system with a similar constraint $u\in \Sigma$, can be found in \cite[Theorem 4.1]{Garcke}.
The sketch of the proof is as follows. We begin from interior regularity. For fixed $x_0\in \Omega$, let $R>0$ be such that 
\begin{align*}
    Q_{2R}(x_0):=\{x\in \R^d:\ \normmm{x_i-x_{0i}}<2R,\ i=1,\ldots, d \}\subset \Omega.
\end{align*}
We then introduce a smooth cutoff $\xi\in C^\infty_c(\Omega)$ such that
\begin{align*}
        \begin{cases}
          \zeta= 0,\quad \text{in } \Omega\setminus Q_{2R}(x_0),\\
          0\leq \zeta\leq 1,
          \\
           \zeta=1,\quad \text{in }Q_R(x_0),
           \\
           \normmm{\nabla \zeta }\leq \frac2{R}.
            \end{cases}
\end{align*}
 We also define $\mmu:=\frac{\int_{Q_{2R}(x_0)}u\dx}{\normmm{Q_{2R}(x_0)}}$. By testing \eqref{weakform} with $w=\zeta^2(u-\mmu)$ we easily infer 
\begin{align*}
    &\sum_{i=1}^N\int_\Omega \normmm{u\wedge \partial_{x_i}(\zeta(u-\mmu))}^2\dx+\sum_{i=1}^N\int_\Omega (u\wedge \partial_{x_i}u, \zeta^2(u-\mmu)\wedge \partial_{x_i}u)\dx\\&=\sum_{i=1}^N\int_\Omega\normmm{u\wedge (u-\mmu)\partial_{x_i}\zeta}^2\dx+\int_\Omega g\cdot (\zeta^2(u-\mmu))\dx.
\end{align*}
Note now that 
\begin{align*}
\sum_{i=1}^N\int_\Omega \normmm{u\wedge \partial_{x_i}(\zeta(u-\mmu))}^2\dx= \frac 1 N \int_\Omega\normmm{\nabla (\zeta(u-\mmu)))}^2\dx+\sum_{i=1}^N\int_\Omega \normmm{\bP u\wedge \partial_{x_i}(\zeta(u-\mmu))}^2\dx.
\end{align*}
Also, we get
\begin{align*}
    \normmm{\int_\Omega g\cdot (\zeta^2(u-\mmu))\dx}= \normmm{\int_\Omega g\cdot (\zeta^2(\bP u-\bP \mmu))\dx}\leq CM\int_{Q_{2R}(x_0)} \normmm{g}\dx,
\end{align*}
as well as, recalling $\normmm{u}\leq \sqrt{M^2+\frac1{N}}$,
\begin{align*}
    \sum_{i=1}^N\int_\Omega\normmm{u\wedge (u-\mmu)\partial_{x_i}\zeta}^2\dx=\sum_{i=1}^N\int_\Omega\normmm{u\wedge (u-\mmu)\partial_{x_i}\zeta}^2\dx\leq \frac{C(M)}{R^2}\int_{Q_{2R}(x_0)}\normmm{u-\mmu}^2\dx.
\end{align*}
Now the critical term, for which we exploit ${\sum_{i=1}^N\int_\Omega \zeta^2\normmm{(u-\mmu)\wedge \partial_{x_i}u}^2\dx}\geq0$ : we have, recalling $u-\mmu\in T\Sigma$, 
\begin{align*}
    &{\sum_{i=1}^N\int_\Omega (u\wedge \partial_{x_i}u, \zeta^2(u-\mmu)\wedge \partial_{x_i}u)\dx}\\&
    ={\sum_{i=1}^N\int_\Omega \zeta^2\normmm{(u-\mmu)\wedge \partial_{x_i}u}^2\dx}+{\sum_{i=1}^N\int_\Omega (\bP \mmu\wedge \partial_{x_i}u, \zeta^2(u-\mmu)\wedge \partial_{x_i}u)\dx}\\&\geq {\sum_{i=1}^N\int_\Omega \zeta^2\normmm{(u-\mmu)\wedge \partial_{x_i}u}^2\dx}-{\sum_{i=1}^N\int_\Omega \zeta^2\normmm{(u-\mmu)\wedge \partial_{x_i}u}^2\dx}-\frac14{\sum_{i=1}^N\int_\Omega \zeta^2\normmm{\bP \mmu\wedge \partial_{x_i}u}^2\dx}\\&
    =-\frac14{\sum_{i=1}^N\int_\Omega \zeta^2\normmm{\bP \mmu\wedge \partial_{x_i}u}^2\dx},
\end{align*}
and this last term can be estimated as follows, for any $q>0$,
\begin{align*}
    &\frac14{\sum_{i=1}^N\int_\Omega \zeta^2\normmm{\bP \mmu\wedge \partial_{x_i}u}^2\dx}= \frac14{\sum_{i=1}^N\int_\Omega \normmm{\bP \mmu\wedge \partial_{x_i}(\zeta (u-\mmu))-\bP \mmu\wedge (u-\mmu)\partial_{x_i}\zeta}^2\dx}\\&
    \leq \left(\frac14+\frac q{16}\right)\sum_{i=1}^N\int_\Omega \normmm{\bP \mmu\wedge \partial_{x_i}(\zeta (u-\mmu))}^2\dx+\left(\frac14+\frac1 q   \right)\sum_{i=1}^N\int_\Omega \normmm{\bP \mmu\wedge (u-\mmu)\partial_{x_i}\zeta }^2\dx\\&
    \leq \left(\frac14+\frac q{16}\right)\normmm{\bP\mmu}^2\sum_{i=1}^N\int_\Omega\normmm{\partial_{x_i}(\zeta(u-\mmu))}^2\dx+\frac{C(M,\frac1q)}{R^2}\int_{Q_{2R}(x_0) }\normmm{u-\mmu}^2\dx\\&
    \leq \left(\frac14+\frac q{16}\right)M^2\sum_{i=1}^N\int_\Omega\normmm{\partial_{x_i}(\zeta(u-\mmu))}^2\dx+\frac{C(M,\frac1q)}{R^2}\int_{Q_{2R}(x_0) }\normmm{u-\mmu}^2\dx.
\end{align*}
Assuming now $M<\sqrt\frac{ 4}{ N}$, we can rearrange the terms to obtain 
\begin{align*}
  &\left(\tfrac 1 N-M^2\left(\frac14+\frac q{16}\right)     \right)\sum_{i=1}^N \int_\Omega\normmm{\partial_{x_i}(\zeta(u-\mmu))}^2\dx+\sum_{i=1}^N\int_\Omega \normmm{\bP u\wedge \partial_{x_i}(\zeta(u-\mmu))}^2\dx\\&
  \leq   \frac{C(M,\frac1q)}{R^2}\int_{Q_{2R}(x_0) }\normmm{u-\mmu}^2\dx+CM\int_{Q_{2R}(x_0)} \normmm{g}\dx,
\end{align*}
and observe that, choosing $q>0$ such that
\begin{align*}
     \frac1 N-M^2\left(\frac14+\frac q{16}\right)=\epsilon>0,    
\end{align*}
we get 
\begin{align}
  \non&\epsilon\sum_{i=1}^N\int_\Omega \normmm{\partial_{x_i}(\zeta(u-\mmu))}^2\dx+\sum_{i=1}^N\int_\Omega \normmm{\bP u\wedge \partial_{x_i}(\zeta(u-\mmu))}^2\dx\\&
  \leq   \frac{C(M,\frac1q)}{R^2}\int_{Q_{2R}(x_0) }\normmm{u-\mmu}^2\dx+CM\int_{Q_{2R}(x_0)} \normmm{g}\dx,\label{pp}
\end{align}
which entails the result of interior integrability $\bW^{1,p}(\Omega')$, for some $p>2$, for any $\Omega'\subset\subset \Omega$, arguing, for instance, as in \cite[Lemma 4.1]{Garcke}, namely after applying the Sobolev-Poincaré inequality on cubes (cf., e.g., \cite[Theorem A.2]{Garcke}) and then using \cite[Proposition A.1]{Garcke}, whose proof was first proposed in \cite{GiaquintaModica}. To show the higher integrability of $\nabla u$
at the boundary, since $\partial\Omega$ is of Lipschitz class, this can be obtained choosing $x_0\in \partial \Omega$, arguing similarly as to obtain \eqref{pp}, and then concluding following the proof of \cite[Theorem 4.1]{Garcke}. This concludes the proof of \eqref{higherint}.
\end{proof}
To obtain higher-order integrability for $\nabla u_h$, recalling \eqref{inclusion4}, we aim at applying Lemma \ref{higher}, with $g:=-\partial_t \widehat u_h-\Psi'(u_h)\in \bL^2(\Omega)$, using $\norm{u_h}_{L^\infty(0,\infty;\bH^1(\Omega))}\leq C$ as well as \eqref{estimates3} and \eqref{boundu}. This gives
\begin{align}
\norm{u_h}_{\bW^{1,p}(\Omega)}\leq C(M)(1+\norm{\partial_t\widehat u_h}_{\bL^2(\Omega)}),\label{ctrl0}
\end{align}
for some $p>2$, and thus 
\begin{align}
\norm{u_h}_{\bL^2(0,T;\bW^{1,p}(\Omega))}\leq C(M).
\label{ctrl2}
\end{align}
 Therefore, by interpolation with $L^\infty(0,\infty;\bH^1(\Omega))$ we finally infer that there exists $r>2$ such that 
\begin{align}
\norm{u_h}_{L^r(0,T;\bW^{1,r}(\Omega))}\leq C(M),
\label{ctrl1}
\end{align}
for any $T>0$. This gives of course
\begin{align}
u_h\rightharpoonup u,\quad\text{ weakly in }L^r(0,T;\bW^{1,r}(\Omega)),
    \label{precompr}
\end{align}
as $h\to0$. 

Arguing in the same way, we can also obtain higher integrability for $\nabla u^h$, using \eqref{inclusion5b} and Lemma \ref{higher} with $g=-v^h-\Psi'(u^h)$. In particular, also using \eqref{estimates1}, we get, for almost any $t\geq 0$, 
\begin{align}
	\Vert{u^h}\Vert_{\bW^{1,p}(\Omega)}\leq C(M)\Big(1+\Vert{v^h}\Vert_{\bL^2(\Omega)}\Big),\label{ctrl0A}
\end{align}
as well as, using \eqref{estimates3} for $v^h$, for some $s>2$, 
\begin{align}
	\Vert{u^h}\Vert_{L^s(0,T;\bW^{1,s}(\Omega))}\leq C(M),
	\label{ctrl1bb}
\end{align}
for any $T>0$, and thus also 
\begin{align}
	u^h\rightharpoonup u,\quad\text{ weakly in }L^s(0,T;\bW^{1,s}(\Omega)),
	\label{precompr1}
\end{align}
as $h\to0$, where we exploited \eqref{DeGiorgiA}-\eqref{DeGiorgiB} with $z=u$ by \eqref{uhA}.
\subsection{Strong convergence of the gradients}
In this section, we need to show that $\nabla u_h$ strongly converges to $\nabla u$ in $\bL^2(\Omega\times(0,T))$, for any $T>0$, as $h\to0$. To this end, we choose $w=u_h-u$ in \eqref{inclusion5}, which is possible thanks to \eqref{bounds} and \eqref{boundu}. This gives
\begin{align}
&\int_\Omega \partial_t \widehat u_h\cdot (u_h-u)\dx+\sum_{i=1}^N\int_\Omega ((u_h-u)\wedge \partial_{x_i}u_h,u_h\wedge   \partial_{x_i}u_h)\non\dx\\&+\sum_{i=1}^N\int_\Omega (u_h\wedge \partial_{x_i}(u_h-u),u_h\wedge   \partial_{x_i}u_h)\dx+\int_\Omega\Psi'(u_h)\cdot(u_h-u)\dx=0 ,\quad \text{for any }t\geq 0.\label{inclusion6}
\end{align}
Now, we have, since $u_h,u\in \Sigma$ almost everywhere in $\Omega\times(0,\infty)$,
\begin{align*}
  \frac1N\norm{\nabla (u-u_h)}^2_{\bL^2(\Omega)}\leq &\sum_{i=1}^N\int_\Omega (u_h\wedge \partial_{x_i}(u_h-u),u_h\wedge   \partial_{x_i}(u_h-u))\dx,
\end{align*}
so that, using \eqref{inclusion6}, we infer 
\begin{align}
 \non\frac1N\norm{\nabla (u-u_h)}^2_{\bL^2(\Omega)}&\leq\sum_{i=1}^N\int_\Omega (u_h\wedge \partial_{x_i}u,u_h\wedge   \partial_{x_i}(u_h-u))\dx-\int_\Omega \partial_t \widehat u_h\cdot (u_h-u)\dx\\&\quad -\sum_{i=1}^N\int_\Omega ((u_h-u)\wedge \partial_{x_i}u_h,u_h\wedge   \partial_{x_i}u_h)\dx-\int_\Omega\Psi'(u_h)\cdot(u_h-u)\dx.\label{inclusion8}
\end{align}
As a consequence of the previous regularity properties of the solution, we can estimate the right-hand side as follows. First, by Cauchy-Schwarz and Young's inequalities, we infer 
\begin{align*}
  &  \normmm{\int_\Omega \partial_t \widehat u_h\cdot (u_h-u)\dx+\int_\Omega\Psi'(u_h)\cdot(u_h-u)\dx}\\&\leq (\norm{\partial_t\widehat u_h}_{\bL^2(\Omega)}+\norm{\Psi'(u_h)}_{\bL^2(\Omega)})\norm{u-u_h}_{\bL^2(\Omega)}.
\end{align*}
Then by H\"older's and Young's inequalities, we get, recalling \eqref{estimates1}, \eqref{estimates3}, and \eqref{ctrl0},
\begin{align*}
    &\normmm{\sum_{i=1}^N\int_\Omega ((u_h-u)\wedge \partial_{x_i}u_h,u_h\wedge   \partial_{x_i}u_h)\non\dx}\\&\leq C\norm{\nabla u_h}_{\bL^p(\Omega)}\norm{\nabla u_h}_{\bL^2(\Omega)}\norm{u_h}_{\bL^\infty(\Omega)}\norm{u-u_h}_{\bL^{\frac{2p}{p-2}}(\Omega)}\\&
    \leq C(M)(1+\norm{\partial_t\widehat u_h}_{\bL^2(\Omega)})\norm{u-u_h}_{\bL^{\frac{2p}{p-2}}(\Omega)}.
\end{align*}
Moreover, we have, recalling \eqref{ctrl1},
\begin{align*}
&\normmm{\sum_{i=1}^N\int_0^T\int_\Omega (u_h\wedge \partial_{x_i}u,u_h\wedge   \partial_{x_i}(u_h-u))\dx\dt}\\&=\normmm{\sum_{i=1}^N\int_0^T\int_\Omega ((u_h-u)\wedge \partial_{x_i}u,u_h\wedge   \partial_{x_i}(u_h-u))\dx\dt}\\&\quad
+\normmm{\sum_{i=1}^N\int_0^T\int_\Omega (u\wedge \partial_{x_i}u,(u_h-u)\wedge   \partial_{x_i}(u_h-u))\dx\dt}\\&\quad 
+\normmm{\sum_{i=1}^N\int_0^T\int_\Omega (u\wedge \partial_{x_i}u,u\wedge   \partial_{x_i}(u_h-u))\dx\dt}\\&
\leq 
C\norm{u_h-u}_{\bL^{\frac{2r}{r-2}}(\Omega\times(0,T))}\norm{\nabla u_h}_{\bL^2(\Omega\times(0,T))}\norm{u_h}_{\bL^\infty(\Omega\times(0,T))}(\norm{\nabla u_h}_{\bL^r(\Omega\times(0,T))}+\norm{\nabla u}_{\bL^r(\Omega\times(0,T))})\\&
\quad +C\norm{u_h-u}_{\bL^{\frac{2r}{r-2}}(\Omega\times(0,T))}\norm{\nabla u_h}_{\bL^2(\Omega\times(0,T))}\norm{u}_{\bL^\infty(\Omega\times(0,T))}(\norm{\nabla u_h}_{\bL^r(\Omega\times(0,T))}+\norm{\nabla u}_{\bL^r(\Omega\times(0,T))})\\&
\quad +\sum_{i=1}^N\normmm{\int_0^T\int_\Omega \normmm{u}^2\partial_{x_i}u\cdot \partial_{x_i}(u_h-u)-\frac12\partial_{x_i}\normmm{u}^2u\cdot \partial_{x_i}(u_h-u)\dx\dt}\\&
\leq 
C(M)\norm{u_h-u}_{\bL^{\frac{2r}{r-2}}(\Omega\times(0,T))}(\norm{\nabla u_h}_{\bL^r(\Omega\times(0,T))}+\norm{\nabla u}_{\bL^r(\Omega\times(0,T))})\\&
\quad +\sum_{i=1}^N\normmm{\int_0^T\int_\Omega \normmm{u}^2\partial_{x_i}u\cdot \partial_{x_i}(u_h-u)-\frac12\partial_{x_i}\normmm{u}^2u\cdot \partial_{x_i}(u_h-u)\dx\dt}.
\end{align*}
Summing up all the estimates, and integrating in time over $(0,T)$, we finally deduce from \eqref{inclusion8} that 
\begin{align*}
    &\frac1N\int_0^T\norm{\nabla (u-u_h)}^2_{\bL^2(\Omega)}\dt\\&\leq \int_0^T(\norm{\partial_t\widehat u_h}_{\bL^2(\Omega)}+\norm{\Psi'(u_h)}_{\bL^2(\Omega)})\norm{u-u_h}_{\bL^2(\Omega)}\dt\\&
\quad+C(M)\int_0^T(1+\norm{\partial_t\widehat u_h}_{\bL^2(\Omega)})\norm{u-u_h}_{\bL^{\frac{2p}{p-2}}(\Omega)}\dt
    \\&
    \quad +C(M)\norm{u_h-u}_{\bL^{\frac{2r}{r-2}}(\Omega\times(0,T))}(\norm{\nabla u_h}_{\bL^r(\Omega\times(0,T))}+\norm{\nabla u}_{\bL^r(\Omega\times(0,T))})\\&
\quad +\sum_{i=1}^N\normmm{\int_0^T\int_\Omega \normmm{u}^2\partial_{x_i}u\cdot \partial_{x_i}(u_h-u)-\frac12\partial_{x_i}\normmm{u}^2u\cdot \partial_{x_i}(u_h-u)\dx\dt}
\\&\leq 
C(M)(1+\norm{\partial_t\widehat u_h}_{\bL^2(\Omega\times(0,T))})(\norm{u-u_h}_{\bL^2(\Omega\times(0,T))}+\norm{u-u_h}_{\bL^2(0,T;\bL^{\frac{2p}{p-2}}(\Omega))})\\&
\quad +C(M)\norm{u_h-u}_{\bL^{\frac{2r}{r-2}}(\Omega\times(0,T))}\\&
\quad +\sum_{i=1}^N\normmm{\int_0^T\int_\Omega \normmm{u}^2\partial_{x_i}u\cdot \partial_{x_i}(u_h-u)-\frac12\partial_{x_i}\normmm{u}^2u\cdot \partial_{x_i}(u_h-u)\dx\dt}
\\&\leq 
C(M)(\norm{u-u_h}_{\bL^2(\Omega\times(0,T))}+\norm{u-u_h}_{\bL^2(0,T;\bL^{\frac{2p}{p-2}}(\Omega))}+\norm{u_h-u}_{\bL^{\frac{2r}{r-2}}(\Omega\times(0,T))})\\&
\quad +\sum_{i=1}^N\normmm{\int_0^T\int_\Omega \normmm{u}^2\partial_{x_i}u\cdot \partial_{x_i}(u_h-u)-\frac12(\partial_{x_i}\normmm{u}^2)u\cdot \partial_{x_i}(u_h-u)\dx\dt},
\end{align*}
where we used \eqref{estimates3}, \eqref{bounds},    \eqref{boundu}, and \eqref{ctrl1}. We can now conclude that the right-hand side of the inequality above converges to zero as $h\to0$, recalling \eqref{weakH1}, \eqref{uh} and the bound \eqref{boundu}. Note that the last term converges to zero thanks to the weak convergence \eqref{weakH1} and the fact that $\normmm{u}^2\partial_{x_i}u\in \bL^2(\Omega\times(0,T))$ and $ (\partial_{x_i}\normmm{u}^2) u\in \bL^2(\Omega\times(0,T))$ for any $i=1,\ldots,d$. We can thus conclude that 
\begin{align}
u_h\to u\quad\text{ strongly in }L^2(0,T;\bH^1(\Omega)),
    \label{strongnabla}
\end{align}
for any $T>0$. Of course, recalling the bound \eqref{ctrl1}, we also infer by interpolation that there exits $r_1>2$ such that
\begin{align}
u_h\to u\quad\text{ strongly in }L^{r_1}(0,T;\bW^{1,r_1}(\Omega)).
\label{strongnabla2}
\end{align}
Following the same argument, \textit{mutatis mutandis} we can also obtain from the weak formulation \eqref{inclusion5b}, recalling \eqref{estimates1}-\eqref{estimates3} for $u^h$ and $v^h$, together with \eqref{bounds}, \eqref{DeGiorgiA}-\eqref{DeGiorgiB}, \eqref{uhB}, and \eqref{ctrl1bb}, that
\begin{align}
	u^h\to u\quad\text{ strongly in }L^2(0,T;\bH^1(\Omega)),\quad \forall T>0,
	\label{strongnablaA}
\end{align}
and
\begin{align}
	u^h\to u\quad\text{ strongly in }L^{s_1}(0,T;\bW^{1,s_1}(\Omega)),
	\label{strongnabla2A}
\end{align}
for some $s_1>2$, for any $T>0$.

\subsection{Limit as $h\to 0$}
We can finally pass to the limit in \eqref{inclusion5}, after an integration in time over $(0,T)$, recalling \eqref{bounds}, \eqref{boundu}, the convergences \eqref{c1ab}-\eqref{uh}, and \eqref{strongnabla}. In particular, concerning the two critical nonlinear terms, recalling that $w\in \bH^1(\Omega;T\Sigma)\cap \bL^\infty(\Omega)$ we have by H\"older's inequality that
\begin{align*}
    &\normmm{\int_0^T\int_\Omega (w\wedge \partial_{x_i}u_h,u_h\wedge   \partial_{x_i}u_h)\non\dx\dt-\int_0^T\int_\Omega (w\wedge \partial_{x_i}u,u\wedge   \partial_{x_i}u)\non\dx\dt}\\&
    \leq \normmm{\int_0^T\int_\Omega (w\wedge \partial_{x_i}(u_h-u),u_h\wedge   \partial_{x_i}u_h)\non\dx\dt}\\&
    \quad +\normmm{\int_0^T\int_\Omega (w\wedge \partial_{x_i}u,(u_h-u)\wedge   \partial_{x_i}u_h)\non\dx\dt}\\&\quad 
    + \normmm{\int_0^T\int_\Omega (w\wedge \partial_{x_i}u,u\wedge   \partial_{x_i}(u_h-u))\non\dx\dt}\\&
    \leq 
    \norm{w}_{\bL^\infty(\Omega\times(0,T))}\norm{u_h}_{\bL^\infty(\Omega\times(0,T))}\norm{\partial_{x_i}u_h}_{\bL^2(\Omega\times(0,T))}\norm{\partial_{x_i}(u-u_h)}_{\bL^2(\Omega\times(0,T))}\\&\quad +\norm{w}_{\bL^\infty(\Omega\times(0,T))}\norm{u_h-u}_{\bL^{\frac{2r}{r-2}(\Omega\times(0,T))}(\Omega\times(0,T))}\norm{\partial_{x_i}u}_{\bL^2(\Omega\times(0,T))}\norm{\partial_{x_i}u_h}_{\bL^r(\Omega\times(0,T))}\\&
    \quad +\norm{w}_{\bL^\infty(\Omega\times(0,T))}\norm{u}_{\bL^\infty(\Omega\times(0,T))}\norm{\partial_{x_i}u}_{\bL^2(\Omega\times(0,T))}\norm{\partial_{x_i}(u-u_h)}_{\bL^2(\Omega\times(0,T))}\\&
    \leq C(M)\norm{w}_{\bL^\infty(\Omega\times(0,T))}\left(\norm{\partial_{x_i}(u-u_h)}_{\bL^2(\Omega\times(0,T))}+\norm{u-u_h}_{\bL^\frac{2r}{r-2}(\Omega\times(0,T))}\right)\to0,
\end{align*}
as $h\to 0$, where we crucially used the controls \eqref{estimates1}, \eqref{bounds}, \eqref{boundu}, and \eqref{ctrl1}, together with the strong convergences \eqref{uh}, \eqref{strongnabla}.

Analogously, concerning the similar term in \eqref{inclusion5}, since $w\in \bH^1(\Omega;T\Sigma)\cap \bL^\infty(\Omega)$, we have
\begin{align*}
&\normmm{\int_0^T\int_\Omega (u_h\wedge \partial_{x_i}w,u_h\wedge   \partial_{x_i}u_h)\dx\dt-\int_0^T\int_\Omega (u\wedge \partial_{x_i}w,u\wedge   \partial_{x_i}u)\dx\dt}\\&
\leq C(M)\norm{\partial_{x_i}w}_{\bL^2(\Omega\times(0,T))}\left(\norm{\partial_{x_i}(u_h-u)}_{\bL^2(\Omega\times(0,T))}+\norm{u_h-u}_{\bL^\frac{2r}{r-2}(\Omega\times(0,T))}\right)\to0.
\end{align*}
We can thus pass to the limit as $h\to0$ in \eqref{inclusion5}, integrated in time over $(0,T)$, to finally obain, after a time localization, that $u$ satisfies equation \eqref{inclusion2}.

Analogously, exploiting \eqref{DeGiorgiA}-\eqref{DeGiorgiB}, \eqref{c3b}, and \eqref{strongnablaA}-\eqref{strongnabla2A}, we can also pass to the limit as $h\to0$ in \eqref{inclusion5b}, obtaining 
\begin{align}
	&\int_\Omega  v\cdot w\dx+\sum_{i=1}^N\int_\Omega (w\wedge \partial_{x_i}u,u\wedge   \partial_{x_i}u)\non\dx\\&+\sum_{i=1}^N\int_\Omega (u\wedge \partial_{x_i}w,u\wedge   \partial_{x_i}u)\dx+\int_\Omega\Psi'(u)\cdot w\dx=0,\label{inclusion5bc}
\end{align}   
which entails, after comparison with \eqref{inclusion2}, that, for almost any $t\geq0$,
$$
\int_\Omega  v\cdot w\dx=\int_\Omega  \pt u\cdot w\dx,\quad \forall w\in \bH^1(\Omega;T\Sigma)\cap \bL^\infty(\Omega),
$$
and thus we can finally identify $v=\pt u$ almost everywhere in $\Omega\times(0,\infty)$.
\subsection{Sharp energy dissipation inequality}
We are left with showing the validity of the sharp energy dissipation inequality \eqref{energyineq1}, which is possible due to the use of De Giorgi's interpolation. First, note that we can rewrite \eqref{energy ineq} as 
\begin{align}
	\widehat\cE_1(u_h(t))+\int_\Omega \Psi(u_h(t))\dx
    &+\frac12\int_{\ceil{\frac s h}h}^{\floor{\frac th}h}\norm{\pt \widehat u_h}_{\bL^2(\Omega)}^2\d\tau\non
    +\frac12\int_{\ceil{\frac s h}h}^{\floor{\frac th}h}\norm{v_h}^2_{\bL^2(\Omega)}\d\tau\non
    \\&\leq \widehat\cE_1(u_h(s))+\int_\Omega \Psi(u_h(s))\dx.
	\label{energy ineqA}
\end{align}
Moreover, choosing $k=0$ in \eqref{energy ineq}, we also get 
\begin{align}
	\widehat\cE_1(u_h(t))+\int_\Omega \Psi(u_h(t))\dx
    &+\frac12\int_{0}^{\floor{\frac th}h}\norm{\pt \widehat u_h}_{\bL^2(\Omega)}^2\d\tau\non
    +\frac12\int_{0}^{\floor{\frac th}h}\norm{v_h}^2_{\bL^2(\Omega)}\d\tau\non
    \\&\leq \widehat\cE_1(u_0)+\int_\Omega \Psi(u_0)\dx.
	\label{energy ineqB}
\end{align}
 Now, let us notice that from the uniform bounds \eqref{bounds0} together with the strong convergences \eqref{strongnabla}-\eqref{strongnabla2}, which give
$$
u_h(t)\to u,\quad\text{ strongly in }\bH^1(\Omega),\quad\text{ for a.a. }t\geq0,
$$
we can easily obtain that 
\begin{align*} \widehat{\mE}_{1}(u_h(s))\to \widehat{\mathcal E}_{1}(u(s)),\text{ as }h\to0,
\end{align*}   
for almost any $s\geq0$. Therefore, the right-hand side of \eqref{energy ineqA} converges to  $\widehat \cE_{1}  {(}u(s)  {)}+\int_\Omega \Psi(u(s))\dx$ for almost any $s\geq0$. Similarly, the energy term in the left-hand side of \eqref{energy ineqA} converges as $h\to0$ for almost any $t\geq0$. 

Then, by lower semicontinuity of the norms involved, recalling the weak convergences \eqref{c3a}-\eqref{c3b}, with the identifications $w=v=\pt u$ shown in the previous sections, we infer
\begin{align*}
	&	\int_s^t\norm{\pt u}^2_{\bL^2(\Omega)}\d\tau\leq \liminf_{h\to0} 	\int_{\ceil{\frac sh}h}^{\floor{\frac th}h}\norm{\pt \widehat u_h}^2_{\bL^2(\Omega)}\d\tau,\\&
	\int_s^t\norm{\pt u}^2_{\bL^2(\Omega)}\d\tau\leq \liminf_{h\to0} 	\int_{\ceil{\frac sh}h}^{\floor{\frac th}h}\norm{v_h}^2_{\bL^2(\Omega)}\d\tau,
\end{align*}
for any $s,t\geq0$. 

As a consequence, we can pass to the limit in \eqref{energy ineqA} and \eqref{energy ineqB}, finally obtaining \eqref{energyineq1} for almost any $0\leq s<t$, with $s=0$ included. In order to further obtain the inequality \textit{for any }$t>0$, it is enough to notice that, by the regularity of $u$ and standard embedding theorems, it holds $u\in C([0,\infty);\bL^p(\Omega))$ for any $p\geq 2$, as well as $u\in C_w([0,\infty);\bH^1(\Omega))$, i.e., weakly continuous with values in $\bH^1(\Omega)$. As a consequence, we can argue   {as in the argument for} \eqref{lsc} to infer that the map 
\begin{align*}
t\mapsto \widehat\cE_1(u(t)) \text{ is lower semicontinuous on $[0,\infty)$},
\end{align*}
which allows to immediately deduce, by the lower semicontinuity in $t$ of the other integrals in the left-hand side of \eqref{energyineq1}, that the energy inequality holds for any $t>0$ and for almost any $0\leq s<t$, with $s=0$ included. This concludes the proof of Theorem \ref{thm:existenceweak}.
\\

\textbf{Acknowledgments.} Part of this contribution was completed while AP was visiting HG at the Faculty of Mathematics of the University of Regensburg, and TL at the University of Heidelberg, whose hospitality is kindly acknowledged. AP also gratefully aknowledges support from the Alexander von Humboldt Foundation for the stay in Regensburg.  
AP is a member of Gruppo Nazionale per l’Analisi Matematica, la Probabilità e le loro Applicazioni (GNAMPA) of
Istituto Nazionale per l’Alta Matematica (INdAM).  

\bibliographystyle{siam}
	\bibliography{multiAC}
	
\end{document}